\newenvironment{tproof}{\begin{proof}[\textbf{Proof}]}{\end{proof}}
\newlength{\dhatheight}
\newcommand{\Pmir}[1]{P_{#1}^{\mathrm{mir}}}
\newcommand{\tj}{\tilde{j}}
\newcommand{\car}{\cap^{\mathrm{ent}}}
\newcommand{\eI}{\mathrm{I}}
\newcommand{\Mmir}[1]{M_{#1}^{\mathrm{mir}}}
\newcommand{\Umir}[1]{U_{#1}^{\mathrm{mir}}}
\newcommand{\Omsd}[1]{\mathcal{O}_{\mathrm{sd}}\left(#1\right)}
\newcommand{\RRR}{\mathcal{R}}
\newcommand{\tjwa}{\tj_{p,w}}
\newcommand{\tjw}{\tj_{p,w_{\mathrm{o}}}}
\newcommand{\wop}{w_{\mathrm{o}}}
\newcommand{\wlo}{w.l.o.g }
\newcommand{\tjh}{\tj_{\mathrm{hor}}}
\newcommand{\tjhp}{\tj_{\mathrm{hor}}^\prime}
\newcommand{\tjhpp}{\tj_{\mathrm{hor}}^{\prime\prime}}
\newcommand{\sd}{\mathrm{d}}
\newcommand{\quasi}[1]{\mathrm{quasi}(#1)}
\newcommand{\SSS}{\mathcal{S}}
\newcommand{\XXX}{\mathcal{X}}
\newcommand{\ih}{\mathrm{ih}}
\newcommand{\Aut}[1]{\mathrm{Aut}_{#1}}
\newcommand{\Lie}[1]{\mathrm{Lie}(#1)}
\newcommand{\ossa}{2\mathrm{a}1\mathrm{d}}
\newcommand{\tlarrow}{\leftarrowtriangle}
\newcommand{\mirsl}[1]{{P^{\mathrm{mir,as},0}_{#1}}}
\newcommand{\mult}[2]{\mathrm{mult}(#1,#2)}
\newcommand{\Stab}[2]{\mathrm{Stab}_{#1}(#2)}
\newcommand{\DDDD}[1][]{\mathcal{D}_{#1}} 
\newcommand{\kkk}{\mathfrak{k}}
\newcommand{\AAA}[1][]{\mathcal{A}_{\kkk#1}}
\newcommand{\AAnk}[1][]{\mathcal{A}_{#1}}
\newcommand{\Prink}[1][]{\mathcal{C}_{#1}}
\newcommand{\BBnk}[1][]{\mathcal{B}_{#1}}
\newcommand{\VVV}{\mathcal{V}}
\newcommand{\UUU}[1]{\mathcal{N}_{#1}}
\newcommand{\s}{\setminus}
\newcommand{\Pri}[1][]{\mathcal{C}_{\kkk#1}}
\newcommand{\Set}[1]{\mathrm{Set}(#1)}
\newcommand{\XX}{\mathrm{X}}
\newcommand{\FF}{\tilde{\mathrm{F}}}
\newcommand{\F}{\mathrm{F}}
\newcommand{\YY}{\mathrm{Y}}
\newcommand{\WW}{\mathrm{W} }
\newcommand{\Z}{\mathrm{Z}}
\newcommand{\I}[1][]{\mathrm{I}^{#1}}
\newcommand{\sXi}{\mathrm{\Xi}^{\mathrm{st}}}
\newcommand{\Om}{\mathcal{O}}
\newcommand{\Omm}[1]{\mathcal{O}_{\mathbf{f}}\left(#1\right)}
\newcommand{\row}[1]{\text{row}(#1)}
\newcommand{\col}[1]{\text{col}(#1)}
\newcommand{\bb}{\bullet}
\newcommand{\x}{\bb}
\newcommand{\eu}{\text{ue}}
\newcommand{\A}{\mathbb{A}_{\kkk}}
\newcommand{\co}{\text{c}}
\newcommand{\e}{\text{e}}
\newcommand{\gray}{gray!30}
\newcommand{\grid}[1]{\draw (0,0)[step=1,\gray]grid(#1,#1);}
\newcommand{\di}{6}
\newcommand{\mm}[2]{(#2,\di-#1)}
\newcommand{\ci}[4][\di]{\ifthenelse{\equal{#4}{a}}{\draw (#2-0.5,#1+0.5-#3)[gray!70,fill=gray!70]circle(0.17);}{\ifthenelse{\equal{#4}{b}}{\fill (#2-0.5,#1+0.5-#3)circle(0.2);}{\ifthenelse{\equal{#4}{c}}{\draw (#2-0.5,#1+0.5-#3)[line width=1pt,gray!70]circle(0.2);}{}} }\ifthenelse{\equal{#4}{d}}{\draw (#2-0.5,#1+0.5-#3)[line width=1pt]circle (0.2);}{\ifthenelse{\equal{#4}{e}}{\draw[line width=1pt,gray!70](#2-0.7,#1+0.3-#3)--(#2-0.3,#1+0.3-#3)--(#2-0.5,#1+0.8-#3)--(#2-0.7,#1+0.3-#3);}{\ifthenelse{\equal{#4}{f}}{\draw[line width=1pt](#2-0.7,#1+0.3-#3)--(#2-0.3,#1+0.3-#3)--(#2-0.5,#1+0.8-#3)--(#2-0.7,#1+0.3-#3); }}}} 
\newcommand{\fir}[2][1]{\draw(0,#1)node[anchor=west,fill=white,inner sep=2pt]{$#2$};}
\newcounter{row}
\newenvironment{ti}[1][0.6]{\begin{tikzpicture}[scale=#1]\setcounter{row}{1}}{\end{tikzpicture}}
\newcommand{\f}[2]{\draw (\value{row}-0.5,\di+0.5-\value{row})node{#1};\foreach \x/\y in {#2} {\ci{\x}{\value{row}}{\y}}\stepcounter{row}}
\newcommand{\rif}{\overset{\mathrm{f}}{\rightarrow}}
\newcommand{\riff}[1]{\overset{\mathrm{f},#1}{\rightarrow}}
\newcommand{\rifff}[2]{\overset{\mathrm{f},#2,#1}{\rightarrow}}
\newcommand{\chh}[1][]{}
\newcommand{\Di}[1]{\mathrm{dim}\left(#1\right)} 
\newcommand{\X}[2][]{\mathbf{X}_{#1}(#2)}
\newcommand{\vsp}{\vspace{1,5mm}}
\newtheorem{theorem}{Theorem}[section]
\newtheorem{expectation}[theorem]{Expectation}
\newtheorem{maincor}[theorem]{Main corollary}
\newtheorem{Nilor}[theorem]{Nilpotent orbit dimension formula}
\newtheorem{prop}[theorem]{Proposition}
\newtheorem{lem}[theorem]{Lemma}
\newtheorem{ob}[theorem]{Observation} 
\newtheorem{Ecor}[theorem]{Exchange corollary}
\theoremstyle{definition}
\newtheorem{example}[theorem]{Example}
\newenvironment{defi}
  {\pushQED{\qed}\axbx}
  {\popQED\endaxbx}
  \newenvironment{stadefi}
  {\pushQED{\qed}\axbxu}
  {\popQED\endaxbxu}
  \newenvironment{pdefi}
  {\pushQED{\qed}\axbxc}
  {\popQED\endaxbxc}
   \newenvironment{remark}
     {\pushQED{\qed}\axbxxx}
     {\popQED\endaxbxxx}
\theoremstyle{remark}
\DeclareFontFamily{U}{mathb}{\hyphenchar\font45}
\DeclareFontShape{U}{mathb}{m}{n}{
      <5> <6> <7> <8> <9> <10> gen * mathb
      <10.95> mathb10 <12> <14.4> <17.28> <20.74> <24.88> mathb12
      }{}
\DeclareSymbolFont{mathb}{U}{mathb}{m}{n}
\DeclareMathSymbol{\act}{3}{mathb}{'374}
\DeclareMathSymbol{\ssquare}{3}{mathb}{"05}
\newtheoremstyle{subdefi}{1pt}{1pt}{}{}{\bfseries}{.}{.5em}{}
\theoremstyle{subdefi}
   \newenvironment{subdef*}
     {\pushQED{\qed}\axbxxxt}
     {\popQED\endaxbxxxt}
\begin{document}
\title{On Unfoldings of Some Integrals of Automorphic Functions on General Linear Groups}
\date{}
\author{Eleftherios Tsiokos}
\maketitle
\begin{abstract} We use results about Fourier coefficients appearing in \cite{Tsiokos2} (and some more obtained here), to obtain information for certain among the integrals of the form $$I=\int_{GL_n(\kkk)Z_n(\A)\s GL_n(\A)}\varphi(g)\phi(g)\F(E)(\tj(g))dg$$ where: $\A$ is the adele ring of a number field $\kkk$; $\varphi$ is a $GL_n(\A)$-cuspidal automorphic form; $\phi$ is a $GL_n(\A)$-automorphic function (even the trivial for some results); $E$ is a $GL_{N}(\A)$-automorphic form for a multiple $N$ of $n$; $\F(E)$ is a Fourier coefficient of $E$ for certain choices of additive functions $\F$ in a set $\BBnk[N]$ which we defined in $\cite{Tsiokos2};$ $\tj$ is a diagonal embedding of $GL_n$ in $GL_N$;  of course $\tj(GL_n)\in\Stab{GL_N}{\F}$; and $Z_n$ is the center of $GL_n$.
	 
\end{abstract}\tableofcontents \section{Integrals of automorphic functions}Results from \cite{Tsiokos2} and simple refinements of them that we use are given in the appendix. Unless otherwise specified, we retain the meaning of the notations and definitions in \cite{Tsiokos2}; the main exception is in relation to the mirabolic group in the definition below. In some occasions we recall or reference  notations from \cite{Tsiokos2}, but we soon and frequently use them without mention (even in their first occurrence here); in particular, notations about $\AAnk$-trees are freely used in the present paper: in many proofs and in the appendix.   

Relations of the present paper to the literature are given in Remark \ref{rlit} and Definition \ref{dequ}.

We choose an integer $n\geq 2$. Whenever $N$ appears, it is assumed to be a positive multiple of $n$.  By ``AF" we mean additive function. For $\F$ being an ``AF" we denote by $D_{\F}$ its domain. Let $H$ be an algebraic group containing $D_\F$ as an algebraic subgroup. For $\gamma\in H$ we define the AF $\gamma\F$ given by $\gamma\F(u)=\F(\gamma^{-1}u\gamma)$ for all $u\in D_{\gamma\F}:=\gamma D_\F\gamma^{-1}$. We also define $\Stab{H}{\F}=\{\gamma\in H:\gamma\F=\F\}.$
\begin{defi}[$\Pmir{n}$, $\Mmir{n}$, $\Umir{n}$]\label{dmir} We denote by $\Pmir{n}$ the $GL_n$-parabolic subgroup with Levi isomorphic to $GL_{n-1}\times GL_1$ so that $GL_1$ appears in the lower right corner. The unipotent radical (resp. the Levi) of $\Pmir{n}$ is denoted by $\Umir{n}$ (resp. $\Mmir{n}$). \end{defi}

\begin{stadefi}\label{stadefilab}Unless otherwise specified, for a product of general linear groups: the standard choice is assumed for root data (including parabolic and Levi subgroups); root data, rows, columns, and entries,  are with respect to the biggest such group that is mentioned. The previous sentence also holds for $\ossa$-groups, which we recall in Definition \ref{ossagroup}, but to be clear, it does not affect statements for diagonally embedded general linear groups. The Weyl group of $GL_n$ is denoted by $W_n$ and its elements are identified with permutation matrices.

In case not otherwise specified, an index variable is assumed to take all the values for which the statement containing it has a precise meaning\footnote{For example, due to this rule, in (ii) in the first definition below, we do not need to mention that $i$ takes as values: $1,...,t$. However, there are some occasions in which we mention (for emphasis) the values obtained by this rule.}. 

We denote by $U_n$ the set of upper triangular unipotent matrices of $GL_n$, and by $U_{n,(i,j)}$ or  $U_{(i,j)}$, the root group of $GL_n$ which is nontrivial on the $(i,j)$ (matrix) entry.

The transpose of a matrix $A$ is denoted by $A^t$.

We identify $\Mmir{N}$ with $GL_{N-1}\times GL_1$  as needed. For example (by recalling the concept ``$\Omm{\F}$" from \ref{AOF}) as a result of this identification the following statement is \textbf{wrong}:  $\Omm{\F}=\emptyset$ for all $\F$ with $D_\F\subseteq\Mmir{n}$.\end{stadefi} 

 The AFs $\F$ as in the abstract that we consider, belong to a set $\RRR_{n,N}$ defined below which  easily\footnote{Easy in the sense that it is an easy refinement of a well known argument, that is, we proceed as in the Whittaker expansion of $GL_N(\A)$-cusp forms, except that we deal with blocks and the process can become increasingly diagonal.}  turns out to be a subset of $\BBnk[N]$ (Part (i) in Lemma \ref{Blem}). The meaning of $\BBnk[N]$ is recalled in Definition \ref{defBBB}, where we also define extensions of $\BBnk[N]$ which we need (for Lemma \ref{Blem} and the proof of Proposition \ref{prop1}). Except for the last statement for the integrals $I$ in the abstract (which is Proposition \ref{Sprop}) we restrict to cases in which one among the $GL_N$- parabolic subgroups from which $E$ is induced is $\Pmir{N}$. We first encounter integrals $I$ as in the abstract, in Proposition \ref{ftprop}, where we only unfold until $I$ is expressed as a (finite) sum of integrals over a factorizable domain of Fourier coefficients. The AFs corresponding to these Fourier coefficients are first studied in Lemma \ref{Blem} after Part (i), where they are proved to belong to certain among the previously mentioned extensions of $\BBnk[...]$. Then in Proposition \ref{prop1} we return to the integrals $I$, and the information in the appendix together with Lemma \ref{Blem} are essential. Finally (the already mentioned) Proposition \ref{Sprop}   only adresses cases of $I$ in which $\F$ is the trivial AF; also, the proof of Proposition \ref{Sprop} uses the appendix to a lesser extent than Proposition \ref{prop1}, and by restricting the proofs of Propositions \ref{prop1} and \ref{Sprop} to the intersection of these two propositions, the proofs obtained have differences (see Remark \ref{avmain}). 
 
 As it happens in many cases\footnote{Cases different from these include many of the often called ``new way integrals".}  in the Rankin-Selberg method, in Propositions \ref{prop1} and \ref{Sprop}, a dimension equation formulated by D. Ginzburg---after we extend the set of integrals on which it has been formulated--- (see Definition \ref{dequ}) is satisfied, or is not satisfied but  in ways involving a modest extra effort at most.

\begin{defi}[$\RRR_{n,N}$, $P_i$, $Q_i$]\label{mainafs}	Consider an AF $\F$, and subgroups $P_i,Q_i,V_i$ of $GL_N$ for $1\leq i\leq t$ (for a positive integer $t$), and also $Q_{t+1}$, all of them defined over $\kkk$ (which is a number field fixed throughout the paper), as follows:\begin{itemize}
		\item[(i)] $D_\F=V_1...V_t$. 
		\item[(ii)] $V_i$ is the maximal abelian unipotent normal algebraic subgroup of $P_i$ which: normalizes the unipotent radical $U_{Q_i}$ of $Q_i$, and $V_iU_{Q_i}$ is the unipotent radical of $P_i$ (hence, if $P_i=Q_i$, then $V_i$ is the unipotent radical of $P_i$).

		\item[(iii)] $\F|_{V_i}$ is in the open orbit of the action by conjugation of $P_i$  on the AFs with domain $V_i$.
		\item[(iv)] $P_1$ is a maximal $GL_N$-parabolic subgroup and $Q_1=P_1$. Let $Q_{i+1}$ be defined so that we have the semidirect product of algebraic subgroups: $\Stab{P_i}{\F|_{V_i}}=V_i\rtimes Q_{i+1}$. We require that: $Q_{i}$ is equal to a general linear group (that is, any isomorphic copy of $GL_x$ (for some $x$)) or equal to a parabolic subgroup of a general linear group; $Q_{t+1}$ is  a general linear group; $P_{i}$ is equal to $Q_{i}$ or equal to a maximal parabolic subgroup of $Q_{i}$; and $P_i$ is not a general linear group. To avoid confusion, the usual assumption about varying indices (mentioned in \ref{stadefilab}) gives here (in (iv)) that $i$ takes the values $1,...,t$.
		\item[(v)] Here we just fix a convenient choice among conjugate ones. We require that: $D_\F\subseteq U_N$, all the $V_i$ are generated by $\ossa$-groups (which we recall in Definition \ref{ossagroup}), any two $V_i$ and $V_j$ have no nontrivial entries in common, $Q_{t+1}$ is equal to $\tj(GL_n)$ where $\tj: GL_n\rightarrow GL_N$ is the embedding given by \begin{equation}\label{jlab}\tj(g)=\begin{pmatrix}
		g&&\\&\ddots&\\&&g
		\end{pmatrix}, \end{equation} and $J_{\F|_{V_i}}$ (recalled in Definition \ref{mtrox}) is a  conjugate of a Jordan matrix by an element in $W_N$.
				\end{itemize}
	We denote by $\RRR_{n,N}$ the set of such AFs $\F$. Notations depending on $\F$ without mentioning $\F$ (e.g. $P_i$ and $Q_i$) are assumed to be defined each time with respect to the way $\F$ is chosen in $\RRR_{n,N}$.  \end{defi}
One can directly obtain a more explicit description of $\RRR_{n,N}$; for example, the unipotent radical of  $P_i$ is either abelian or two step nilpotent; we mention more such information in Observation \ref{expr}, but only use it in the proof of Lemma \ref{Blem} (and there it is used freely). 

     Even though each $Q_i$  is reductive if and only if it is a general linear group, we frequently use phrases such as ``$Q_i$ is reductive".

\begin{example}Two examples of $\F\in\RRR_{n,N}$ (with only two ``diagonality levels"\footnote{That is, $x(t)=2$,  where $x(...)$ is defined Definition \ref{papardefi}.}), appear in the two pictures in Remark \ref{prema}. Part of a choice of data describing the choice of $\F$ described in the left (resp. right) picture is: $P_1$ with Levi isomorphic to $GL_{12}\times GL_2$, $t=4$, $P_i=Q_i$ except for $i=2$ (resp. the Levi of $P_1$ is isomorphic to $GL_{14}\times GL_{2}$, $t=5$, and $P_i=Q_i$ except for $i=3$). Of course in the left choice of $\F$ there is a second way to choose the data, that is the Levi of $P_1$ is isomorphic to $GL_6\times GL_8$, $t=4$ (then we always have $P_i=Q_i$). 
	
\end{example}
\begin{lem}[and definition of $\wop$]\label{doubc}
	Let $\F\in\RRR_{n,N}.$ The double cosets of $\Pmir{N}(\kkk)\s GL_N(\kkk)/(\tj(GL_n)D_\F)(\kkk)$ are all represented by elements in $W_N$. We denote by $\wop$ the minimal length element in $W_N$ representing the open double coset.
\end{lem}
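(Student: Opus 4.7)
The statement is that every element $g \in GL_N(\kkk)$ lies in a double coset $\Pmir{N}(\kkk) \, g \, (\tj(GL_n) D_\F)(\kkk)$ containing some permutation matrix. Setting $H := \tj(GL_n) D_\F$, my starting point is the Bruhat decomposition: since $\Pmir{N}$ contains the upper-triangular Borel $B_N = T_N U_N$, every element of $GL_N$ can be written as $p w b$ with $p \in \Pmir{N}$, $w \in W_N$, $b \in B_N$. Splitting $b = t u$ with $t \in T_N$ and $u \in U_N$, the inclusions $T_N \subseteq \Pmir{N}$ and $w T_N w^{-1} = T_N$ let me absorb the torus, reducing the problem to showing that for every $(w, u) \in W_N \times U_N$ there exists $w' \in W_N$ with $\Pmir{N} w u H = \Pmir{N} w' H$.

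Next I use the root-group factorization $U_N = U_N^{+,w} \cdot U_N^{-,w}$, with $U_N^{+,w} := U_N \cap w^{-1} \Pmir{N} w$ and the complement $U_N^{-,w}$ generated by the root groups $U_{(k,j)}$ for $k = w^{-1}(N)$ and $k < j \le N$, as a routine root computation using the explicit form of $\Pmir{N}$ shows. Writing $u = u_1 u_2$ with $u_i \in U_N^{\pm,w}$, conjugation by $w$ sends $u_1$ into $\Pmir{N}$, so $\Pmir{N} w u H = \Pmir{N} w u_2 H$. If $w^{-1}(N) = N$ then $u_2$ is trivial and $w' = w$ does the job; otherwise $k = w^{-1}(N) < N$ and one must eliminate $u_2$ using $H$, possibly at the price of replacing $w$ by another Weyl element.

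This last step is where the structure of $\F \in \RRR_{n,N}$ enters. The subgroup $H$ contains the reductive group $\tj(GL_n)$, and in particular the Weyl subgroup $\tj(W_n) \subseteq W_N$ that permutes block coordinates simultaneously across all $N/n$ diagonal blocks; it also contains the unipotent $\tj(U_n) \cdot D_\F$ of $U_N$, whose generators by condition (v) of Definition \ref{mainafs} occupy a controlled collection of root positions. The plan is first to apply an element of $\tj(W_n)$ (replacing $w$ by $w \cdot \tj(\sigma)$) in order to move $k$ into a canonical residue class modulo $n$, and then to successively cancel the non-trivial entries of $u_2$ in positions $(k,j)$ using appropriately chosen generators of $D_\F \cdot \tj(U_n)$, ordering the cancellations so that each new adjustment does not reintroduce an entry already cleared.

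The main obstacle is verifying that each entry of $u_2$ can indeed be cancelled by some element of $D_\F \cdot \tj(U_n)$; equivalently, that for every $j \in \{k+1,\ldots,N\}$ the root group $U_{(k,j)}$ is covered by a product of the generating root groups of $D_\F \cdot \tj(U_n)$ once the previously performed cancellations have been accounted for. This is a combinatorial problem controlled by the chain $Q_1 \supseteq P_1 \supseteq Q_2 \supseteq \cdots \supseteq Q_{t+1} = \tj(GL_n)$ in Definition \ref{mainafs}, and I expect it to be handled by induction on $t$, together with the disjoint-support property of the $V_i$ in condition (v) and the $\AAnk$-tree formalism from \cite{Tsiokos2} that the introduction declares to be in free use throughout the paper.
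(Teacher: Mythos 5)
Your opening reduction via the Bruhat decomposition $g=pwb$, absorbing the torus and factoring $u=u_1u_2$ with $u_1\in U_N\cap w^{-1}\Pmir{N}w$ and $u_2$ supported in row $k=w^{-1}(N)$, is correct. This is, however, a genuinely different route from the paper's. The paper never works with the Borel at all: it first observes that for any conjugate diagonal embedding $h\mapsto w\,\tj(h)\,w^{-1}$ of $GL_m$ (with $m\mid N$), every block except one already lies in $\Pmir{N}$, and then uses this to establish a chain of identities of double-coset spaces
\begin{equation*}
\Pmir{N}(\kkk)\s GL_N(\kkk)/P(\kkk)=\Pmir{N}(\kkk)\s GL_N(\kkk)/(\tj(GL_n)D_\F^{\mathrm{root}})(\kkk)=\cdots=\Pmir{N}(\kkk)\s GL_N(\kkk)/(\tj(GL_n)D_\F)(\kkk),
\end{equation*}
interpolating by successively replacing each $V_i^{\mathrm{root}}$ with $V_i$. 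This reduces the claim to the standard Bruhat decomposition $\Pmir{N}\s GL_N/P$ for a pair of standard parabolics, with no residual unipotent factor left to eliminate.

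In your version, all of the actual work is pushed into the final paragraph, where you are left to show that the row-$k$ unipotent $u_2$ can be killed by a combination of right-multiplication by $H=\tj(GL_n)D_\F$ and a replacement of $w$ by $w\tj(\sigma)$. This is where the proof currently has a genuine gap. First, the heuristic "move $k$ into a canonical residue class modulo $n$ and then cancel entries of $u_2$ with generators of $D_\F\tj(U_n)$" has to contend with the fact that replacing $w$ by $w\tj(\sigma)$ changes $k$ to $\tj(\sigma)^{-1}(k)$ and conjugates $u_2$ by $\tj(\sigma)$, which in general produces both positive and negative root group entries that must then be re-sorted against $(w\tj(\sigma))^{-1}\Pmir{N}(w\tj(\sigma))$; this is an iterative process and the claim that it terminates with $u_2$ fully cancelled needs an argument. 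Second, even after any such normalization, it is not obvious that every surviving root group $U_{(k,j)}$ in $u_2$ is covered by a product of the generating $\ossa$-groups of $D_\F\cdot\tj(U_n)$ — $D_\F$ is a proper (and, by condition (v), not generally root-generated) subgroup of $U_N$, so "covering" requires exactly the kind of structural input about the chain $Q_1\supseteq P_1\supseteq Q_2\supseteq\cdots\supseteq Q_{t+1}=\tj(GL_n)$ that you wave at but do not use. You acknowledge this by saying you "expect it to be handled by induction on $t$", but no base case, no inductive step, and no reduction is actually given; as written this is an unsupported assertion rather than a proof. The paper's reduction to the parabolic Bruhat decomposition is precisely what sidesteps this combinatorial cancellation.
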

\begin{tproof}Let $P$ be the $GL_N$-parabolic subgroup with unipotent radical $D_\F$. For any embedding of the form $h\rightarrow w\begin{pmatrix}
	h&&\\&\ddots\\&&h
	\end{pmatrix}w^{-1}$ for all $h\in GL_m$ for some $m$ dividing $N$, all blocks except one are contained in $\Pmir{N}$. Hence
	 
	  $$\Pmir{N}(\kkk)\s GL_N(\kkk)/P(\kkk)=\Pmir{N}(\kkk)\s GL_N(\kkk)/(\tj(GL_N)D_\F^{\mathrm{root}})(\kkk) $$
	
	 and$$\Pmir{N}(\kkk)\s GL_N(\kkk)/(\tj(GL_N)(V_1...V_{i+1})^{\mathrm{root}}V_{i+2}...V_t)(\kkk)=\Pmir{N}(\kkk)\s GL_N(\kkk)/(\tj(GL_N)(V_1...V_{i})^{\mathrm{root}}V_{i+1}...V_t)(\kkk) $$ where for any $\ossa$-group $L $ we denote by $L^{\mathrm{root}}$ or $(L)^{\mathrm{root}}$ the smallest group containing $L$ which is generated by root groups. From these equalities and the Bruhat decomposition we are done.	\end{tproof}

\begin{defi}[$\F_{\emptyset,H}$]For an algebraic group $H$ we define $\F_{\emptyset,H}$ to be the AF with domain the trivial subgroup of $H$. \end{defi}
	\begin{defi}\label{alF}
		Let $\F$ be an AF and $a:G\rightarrow H$ be an algebraic homomorphism of algebraic groups $G,H,$ such that $D_{\F}$ and $G$ are algebraic subgroups of a (common to both) algebraic group. We call $a(\F)$, if it exists, the AF with domain $a(D_\F\cap G)$ satisfying \begin{equation*}a(\F)(a(u))=\F(u)\qquad\qquad\forall u\in D_\F\cap G.\qedhere \end{equation*}	\end{defi}
\begin{defi}[$\prod,\times$]\label{pstr} In addition to using $\prod$ and $\times$ for direct products of groups we also use them as follows: Given group homomorphisms $j_i:H\rightarrow H_i$ for $1\leq i\leq x$ we denote the homomorphism of $H$ on $\prod_{i}H_i$ mapping each $h\in H $ to $(j_1(h),...,j_x(h))$ by $\prod_{i}j_i$ or by $j_1\times...\times j_x$. For $\Z_1,...,Z_x$ being AFs with domain a unipotent algebraic subgroup of $H$, in case $D_{\Z_1}...D_{\Z_2}$ is a direct product (of the $D_{\Z_i}$), two other (and preferred) names for $\Z_1\circ...\circ\Z_2$ are $\prod_{i}\Z_i$ and $\Z_1\times...\times Z_x$. \end{defi}

 \begin{prop}[and definitions of $\WW_n$, $\tjwa$, $p$, and $Y(\A)$]\label{ftprop}Let $\F\in\RRR_{n,N}$. Let $I,\varphi,\phi,$ and $E$ be as in the abstract, and further assume that $E$ admits an absolutely convergent Eisenstein series expansion $E(g)=\sum_{\gamma\in \Pmir{N}(\kkk)\s GL_N(\kkk)}f(\gamma g)$. Then\begin{equation}I=\label{sumprop}\sum_w\int_{Y_w(\A)}(-\WW_n)(\varphi)(g) (\mathrm{id}\times \tjwa)(\WW_n)\circ(\F_{\emptyset,n}\times p(w\F))(\phi f)(g,wh\tj(g))dhdg\end{equation}where: $\WW_n$ is an AF with domain $U_n$ which is nontrivial on all simple root groups; $p$ is the projection\footnote{That is, $p$ is trivial on the unipotent radical of $\Pmir{N}$ and restricts to the identity function on the Levi.} of $\Pmir{N}$ onto its Levi; $w$ varies over the elements in $W_N$ which are the smallest choice of representatives (one) for each double coset in the lemma above for which $p(w\F)$ is defined (by Definition \ref{alF}) (equivalent to  ``$p(w\F)$ is defined" is that the restriction of $w\F$ on the intersection of $D_{w\F}$ with the unipotent radical of $\Pmir{N}$ is trivial);  $\tjwa$ is the function given by $\tjwa(g)=p(w\tj(g)w^{-1})$ (for all possible $g$); $Y_w(\A):=(Z_n\tj(U_n)D_\F\cap w^{-1}\Pmir{N}w)(\A)\s (\tj(GL_n)D_{\F})(\A)$, and $\mathrm{id}$ is the identity function on $GL_n$.\end{prop}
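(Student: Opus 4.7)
The plan is to carry out a standard Rankin--Selberg unfolding. First I would write the Fourier coefficient as
$$\F(E)(\tj(g))=\int_{D_\F(\kkk)\s D_\F(\A)}E(u\tj(g))\F(u)^{-1}du,$$
insert the Eisenstein expansion $E(h)=\sum_{\gamma\in\Pmir{N}(\kkk)\s GL_N(\kkk)}f(\gamma h)$, and interchange the sum with both integrals using the absolute convergence hypothesis.

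Next, I would use Lemma \ref{doubc} to split the sum over $\Pmir{N}(\kkk)\s GL_N(\kkk)$ into its right orbits under $(\tj(GL_n)D_\F)(\kkk)$. With $w\in W_N$ ranging over the minimal length representatives of the double cosets, each orbit is identified with $((\tj(GL_n)D_\F)\cap w^{-1}\Pmir{N}w)(\kkk)\s(\tj(GL_n)D_\F)(\kkk)$. Combining this $\gamma$-sum with the integrals over $D_\F(\kkk)\s D_\F(\A)$ and $GL_n(\kkk)Z_n(\A)\s GL_n(\A)$ (the standard collapse of a quotient sum against a quotient integral) yields, for each $w$, a single integral over $((\tj(GL_n)D_\F)\cap w^{-1}\Pmir{N}w)(\kkk)Z_n(\A)\s(\tj(GL_n)D_\F)(\A)$ with integrand $\varphi(g)\phi(g)f(wv\tj(g))\F(v)^{-1}$, where the integration variable has been written as $\tj(g)v$ with $v\in D_\F(\A)$ and $g\in GL_n(\A)$; here I use item (v) of Definition \ref{mainafs}, which places $\tj(GL_n)\subset\Stab{GL_N}{\F}$ and hence normalizes $D_\F$.

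Before the final step I would dispose of the $w$ for which $p(w\F)$ is not defined. If $w\F$ is nontrivial on some root subgroup $U_\alpha\subset D_{w\F}\cap\Umir{N}$, then for $c\in U_\alpha(\kkk)$ one has $c\in\Pmir{N}(\kkk)$ acting trivially on $f$ from the left, whereas the change of variable $v\mapsto(w^{-1}cw)v$ (valid since $w^{-1}cw\in D_\F(\kkk)$) multiplies the integrand by a nontrivial character value, forcing the whole term to vanish.

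The last move is to insert the Whittaker expansion of the cusp form $\varphi$ along $\tj(U_n)$, of the form $\varphi(x)=\sum_{\gamma}(-\WW_n)(\varphi)(\gamma x)$ with $\gamma$ ranging over $U_n(\kkk)\s\Pmir{n}(\kkk)$. Applying this after the substitution $x=g$ and unfolding the $\gamma$-sum against the corresponding portion of the integration enlarges the quotient above to absorb $\tj(U_n)(\A)$, yielding the stated domain $Y_w(\A)$. Tracking the characters: on the $D_\F$-side the character $w\F$ descends along $p$ to $p(w\F)$; on the $\tj(U_n)$-side the Whittaker character $\WW_n$, transported by $\tjwa(g)=p(w\tj(g)w^{-1})$, produces $(\mathrm{id}\times\tjwa)(\WW_n)$; together with the trivial AF on the $GL_n$ variable they assemble into the pair $(\F_{\emptyset,n}\times p(w\F))$ paired with $\phi f$, giving exactly the integrand of \eqref{sumprop}. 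The principal bookkeeping obstacle is the last step: checking that these characters factorize cleanly in the two variables $g$ and $h$, which again rests on $\tj(GL_n)\subset\Stab{GL_N}{\F}$ so that conjugation by $\tj(g)$ preserves $\F$ and decouples the $GL_n$ integration from the $D_\F$ integration.
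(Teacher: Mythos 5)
Your proposal is correct and follows exactly the same route as the paper's own (two-sentence) proof: insert the Eisenstein expansion of $E$, decompose the $\gamma$-sum via Lemma \ref{doubc}, discard the terms where $p(w\F)$ is undefined (your character-vanishing argument is the right mechanism), and then apply the Whittaker/Fourier expansion of the cusp form $\varphi$ to each surviving term. You have merely spelled out the standard Rankin--Selberg unfolding that the paper compresses into ``as in familiar special cases.''
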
\begin{proof}
	As in familiar special cases of $I$, we unfold it by first using the Eisenstein series expansion of $E$ over $\Pmir{N}$ and then by using the Fourier expansion of $\varphi$ over $U_n(\kkk)\s U_n(\A)$, except that the Fourier expansion is applied to every term obtained by Lemma \ref{doubc} (except the terms for which $p(w\F)$ is not defined).\end{proof}As we mentioned earlier, Definition \ref{defBBB} ($\BBnk[H]$, $\BBnk[H](k)$,...) is needed for Lemma \ref{Blem} below. As for the definitions below preceding the Lemma, (they are of course also needed but) deferring to read them to various extents, is likely efficient; after the proof of Lemma \ref{Blem}, only $A(\F)$ is used among them (in Proposition \ref{prop1}). 
\begin{defi}[$C$-root group, $\car$] 
	
	Let $T_N$ be the maximal standard torus of $GL_N$. For any subgroup $H$ of $GL_N$, let $N_{GL_N}(H)$ be the normalizer of $H$ in $GL_N$ (that is $\{\gamma\in GL_N:\gamma H\gamma^{-1}=H\})$. Let $C$ be a subgroup of $GL_N$, which is generated by $\ossa$-groups and elements in $T_N;$ then for a $\ossa$-group $L$ contained in $C$ for which $T_N\cap N_{GL_N}(C)\subseteq N_{GL_N}(L),$ we say that $L$ is a $C$-root group. Also, for $C$ being generated by $\ossa$-groups, and $L^\prime$ being a subgroup of $GL_N$ generated by root groups, we define $C\car L^\prime$ to be the subgroup of $C$ generated by the $C$-root groups which have a nontrivial entry in common with $L^\prime$.\end{defi}
	
\begin{defi}[$\row{L}$, $\col{L}$, $y_i$, $i_1,i_2,...$, $s(i)$, $x(i)$]\label{papardefi}	For $L$ being a $\ossa$-group let $\row{L}$  (resp. $\col{L}$) be the set of $r$ such that $L$ is nontrivial in the $r$-th row (resp. $r$-th column).
	
 Let $y_i$ be the number of elements (which are disjoint sets) in the set $$ \{\row{L}:L \text{ is a $V_i$-root group and $\F(L)\neq\{0\}$}\}$$ (of course replacing  ``$\row{L}$" with ``$\col{L} $" does not affect $y_i$).
	
	We define  $i_1=1$, and  for $r>1$, we define $i_r$ to be the smallest number for which  $y_{i_r}<y_{i_{r-1}}$. We define $s(i) $ to be the biggest number for which $i_{s(i)}\leq i$.
	
	We define $x(1):=1$, and $$x(i+1):=\left\{\begin{array}{cc}x(i)&\text{if }x^\prime(i+1)=x^\prime(i)\\x(i)+1&\text{if }x^\prime(i+1)>x^\prime(i)\end{array}\right.$$ where $x^\prime(i)$ (which is not mentioned again) is the number of entries on which a $V_i$-root group is nontrivial.\end{defi}

\begin{ob}\label{expr}Let $\F\in\RRR_{n,N}$ and $r$ be chosen so that $i_r$ and $i_{r+1}$ are defined. Then the collection of numbers $y_{i_r},y_{i_r+1},...,y_{i_{r+1}}$ expressed in the same order takes the form \begin{equation}\label{braq}\underbrace{y_{i_r},...y_{i_r}}_{a_1\text{ times}},\underbrace{z_1,...,z_1}_{b_1\text{ times}},\underbrace{y_{i_r},...,y_{i_r}}_{a_2\text{ times}},\underbrace{z_{2},...,z_2}_{b_2\text{ times}},....,\underbrace{y_{i_r},...,y_{i_r}}_{a_k\text{ times}},\underbrace{z_k,...,z_k}_{b_k \text{ times}},\underbrace{y_{i_r},...,y_{i_r}}_{a_{k+1}\text{ times}},y_{i_{r+1}}\end{equation}where: in the cases not otherwise specified, any of the numbers appearing in (\ref{braq}) (e.g. $k$) can even be zero; $z_j>y_{i_r}$; if $i_r< i< i_{r+1}$, we have $$x(i+1)> x(i)\iff( y_i=z_j \text{ for some $j$ and }y_{i+1}\neq y_i);$$ if $Q_{i_r}$ is not reductive we have $a_1=1$, $b_1=0$, and $x(i_r+1)>x(i_r) $; if $b_j=0$ then $j=1$; if $b_j>0$ we have $$z_j=\left\{\begin{array}{cc}a_{j+1}y_{i_r}+b_{j+1}z_{j+1}&\text{if }1\leq j<k\\a_{k+1}y_{i_r}+y_{i_{r+1}}&\text{if }j=k\end{array}\right..$$  
\end{ob}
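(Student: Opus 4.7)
The plan is to prove the Observation by a structural induction on $i$ within each level $r$, tracking how the block decomposition carried by $Q_i$ evolves between the "drops" at $i_r$ and $i_{r+1}$.

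First I would fix $r$ and analyze the block structure on $Q_{i_r}$. By Definition \ref{mainafs}(iv) and the fact that the chain eventually terminates at $Q_{t+1}=\tj(GL_n)$, each $Q_i$ is either a general linear group or a parabolic therein, equipped with a canonical block decomposition inherited from the iterated diagonal embeddings. The quantity $y_i$ counts how many of these blocks the $V_i$-root groups occupy. The central dichotomy to establish is: at each step $i\to i+1$ within the level, $V_{i+1}$ either respects the block structure of the level-$r$ decomposition (so $y_{i+1}=y_{i_r}$) or merges several blocks into one (so $y_{i+1}=z_j$ for some $z_j>y_{i_r}$); moreover, successive merges yield a decreasing sequence $z_1>z_2>\cdots$ because each passage to $\Stab{P_i}{\F|_{V_i}}$ further refines the block-diagonal structure.

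Once this dichotomy is in place, items (c)--(e) follow rather directly. For (c), $x(i+1)>x(i)$ iff the $V_{i+1}$-root groups populate entries disjoint from those of the previous $V_j$'s, which occurs precisely at the transition from a merged shape ($y_i=z_j$) back to an un-merged one ($y_{i+1}=y_{i_r}$). Assertion (d) follows because when $Q_{i_r}$ is a proper parabolic, the very first step $P_{i_r+1}\subseteq Q_{i_r+1}$ is forced to refine strictly, immediately producing a single merged block ($a_1=1$, $b_1=0$) and bumping $x$. Assertion (e) holds since two consecutive un-merged stretches cannot coexist without an intervening merge: the stabilizer procedure would otherwise collapse them into one stretch.

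The arithmetic recursion in (f) is the main obstacle. My strategy is a careful row-count: at level $j$, the merged block of $z_j$ rows is subsequently partitioned --- during the next traversal within the same level $r$ --- into $a_{j+1}$ pieces of $y_{i_r}$ rows each and $b_{j+1}$ pieces of $z_{j+1}$ rows each; for $j=k$ the last piece is replaced by one of size $y_{i_{r+1}}$, accounting for the transition to the next level. I would prove this by downward induction on $j$ (from $k$ down to $1$), writing out the block-row partition at each shape transition and using the two-step nilpotent structure of the unipotent radical of $P_i$ (alluded to just before the observation) to rule out other partition shapes. This bookkeeping is the delicate part; all other items rest on the structural dichotomy of the second paragraph.
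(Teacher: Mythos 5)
The paper offers no proof of this observation: the surrounding text calls it something one can ``directly obtain'' from the explicit structure of $\RRR_{n,N}$, so there is no argument in the paper to compare yours against. Your plan---inducting along $i$ within each level $r$ while tracking how the block decomposition carried by $Q_i$ evolves---is the natural way to unpack Definition~\ref{mainafs}, and the overall route is sensible.

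Two concerns with the sketch as written. First, the claim that the $z_j$'s are strictly decreasing is not part of the observation; what is claimed is the recursion $z_j=a_{j+1}y_{i_r}+b_{j+1}z_{j+1}$, from which strict decrease only follows once one fixes that (\ref{braq}) is the canonical decomposition (adjacent $z$-blocks of equal value with no intervening $y_{i_r}$ collapse), so invoking $z_1>z_2>\cdots$ as an independent structural fact is circular. Relatedly, your gloss on the $x$-increment condition (``from a merged shape back to an un-merged one'') omits the direct transition $z_j\to z_{j+1}$, which occurs when $a_{j+1}=0$ and also bumps $x$; the iff in the observation covers that case and your argument must too. Second, the explanation of the non-reductive case is both heuristic and slightly off: $a_1=1,\ b_1=0$ does not say a ``single merged block'' appears---it says the $z_1$-block is absent altogether---so the content is that when $Q_{i_r}$ is a non-reductive parabolic the first transition increments $x$ without changing $y$ away from $y_{i_r}$. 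That should come from tracing through the two allowed shapes of $P_{i_r}$ (equal to $Q_{i_r}$, or a maximal parabolic of it) together with condition (iii) in Definition~\ref{mainafs} that $\F|_{V_{i_r}}$ lies in the open $P_{i_r}$-orbit, rather than from a vague ``forced to refine'' picture. Finally, the row count for the recursion is indeed where the work lies; your sketch frames the right question but does not yet rule out alternative partition shapes, which is precisely what the constraint that the unipotent radical of $P_i$ is abelian or two-step nilpotent (noted just before the observation) must be used for.
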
Recall with the superscript t we denote transpose. \begin{defi}[$A(\F)$]\label{afdef}We define $A(\F) $ to be the set of $w$ as in (\ref{sumprop}) \textbf{except} the ones for which $\alpha$ or  $\beta$ or $\gamma$ hold.
\begin{itemize}
	\item [$\alpha$]  There is an $i$ such that; $V_i\car w^{-1}{\Umir{N}}^tw$ is nontrivial, $V_{i}$ is nontrivial in more rows than columns; $i_{s(i)}<i<i_{s(i)+1}$ and $\alpha$.1 or $\alpha$.2 below holds.\begin{itemize}\item[$\alpha$.1] $x(i)=x(i+1)$. $\F(V_{i+1}\car w^{-1}\Umir{N} w)\neq\{0\}$ (note that for $i+1<i_{s(i)+1}$ this is implied from the part of the sentence preceding the last ``;"). There is a number $i^\prime$  such that $i_{s(i)}\leq i^\prime<i$, $y_{i^\prime}=y_{i_{s(i)}}$ and $x(i^\prime)<x(i)$; choose the biggest such $i^\prime$; then for the $V_i\car w^{-1}{\Umir{N}}^tw$-root group, say $L$, on which $\F$ is nontrivial we have $$\col{L}\cap\left(\bigcup_{L^\prime}\row{L^\prime}\right)=\emptyset$$	where $L^\prime$ varies over the $V_{i^\prime}$-root groups satisfying  $\F(L^\prime)\neq\{0\}$.\item[$\alpha$.2] $Q_{i_{s(i)}}$ is reductive and $x(i_{s(i)}+1)=x(i_{s(i)+1})$.\end{itemize}
	\item [$\beta$] There is an $s$ such that: $V_{i_s}$ intersects more columns than rows; $V_{i_s}\car w^{-1}{\Umir{N}}^t w$ is trivial; and $\beta$.1 or $\beta$.2 below holds.\begin{itemize}
		\item[$\beta.$1] $x(i_{s+1})>x(i_s)+1$; for every $i$ for which $i_{s}<i<i_{s+1}$ and $y_{i-1}<y_i$, the group $V_i\car w^{-1}{\Umir{N}}^t w$ is trivial;		
		\item[$\beta$.2] $Q_{i_s} $ is reductive.
	\end{itemize}
\item[$\gamma$] There is an $s$ such that $V_i\car w^{-1}{\Umir{N}}^t w$ is trivial for all $i_s\leq i\leq i_{s+1}$, and $Q_{i_s}$ is reductive.
\end{itemize}\end{defi}
\begin{remark}Of course $\wop\in A(\F)$. For an $\F\in\RRR_{n,N}$ for which $D_\F$ is not generated by root groups, we have $A(\F)=\{\wop\}$ if and only if: for each $r$, at least one among $Q_{i_{r}}$, and $Q_{i_{r+1}}$ is reductive; and if $Q_{i_{r+1}}$ is not reductive then $V_{i_{r}}$ is nontrivial in more rows than in columns and $y_i=y_{i_{r}}$ for all $i_{r}\leq i< i_{r+1}$.
	
\end{remark}
\begin{lem}\label{Blem}Let $\F\in\RRR_{n,N}$. Consider any $w$ as in (\ref{sumprop}). Then:\begin{itemize}
		\item[(i)] $\F\in\BBnk[N]$;
		\item[(ii)] $p(w\F)\in\BBnk[\Mmir{N}](\Di{D_{p(w\F)}}-\Di{D_{p(\wop\F)}})$;
					\item[(iii)] For $w\not\in A(\F)$ we have $p(w\F)\in\BBnk[\Mmir{N}](\Di{D_{p(w\F)}}-\Di{D_{p(\wop\F)}}-1); $ 
		\item[(iv)] There is an $(\F_{\emptyset,n}\times p(w\F)\rightarrow(\mathrm{id}\times \tjwa)(W_n)\circ (\F_{\emptyset,n}\times p(w\F)),\BBnk[GL_n\times \Mmir{N}]) $-path (note for example this together with (ii) implies that $(\mathrm{id}\times \tjw)(W_n)\circ (\F_{\emptyset,n}\times p(\wop\F))\in\BBnk[GL_n\times \Mmir{N}]$). 
	\end{itemize} \end{lem}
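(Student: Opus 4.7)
The plan for (i) is to induct on the sequence $V_1,\ldots,V_t$ from Definition \ref{mainafs}, showing at each step that the restriction of $\F$ to $V_1\cdots V_i$ lies in the appropriate version of $\BBnk$. The base case uses that $P_1$ is a maximal parabolic and that the open-orbit hypothesis on $\F|_{V_1}$, together with property (v) giving $J_{\F|_{V_1}}$ as a $W_N$-conjugate of a Jordan matrix, exhibits $\F|_{V_1}$ directly as an element of $\BBnk$. At the inductive step the semidirect decomposition $\Stab{P_i}{\F|_{V_i}}=V_i\rtimes Q_{i+1}$ from Definition \ref{mainafs}(iv), combined with the Jordan-matrix hypothesis in (v), supplies the verification needed; the observation immediately following Definition \ref{mainafs} (that the unipotent radical of each $P_i$ is abelian or two-step nilpotent) keeps each step compatible with the $\BBnk$-construction, and Observation \ref{expr} controls the passage from one diagonality level $i_r$ to $i_{r+1}$. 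This is precisely the content of the footnote about iterating the Whittaker expansion blockwise.

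For (ii), I would analyse $p(w\F)$ by tracking, for each $V_i$-root group $L$, how $wLw^{-1}$ decomposes under the Levi-unipotent splitting of $\Pmir{N}$: the pieces landing in $\Umir{N}$ are killed by $p$, while those landing in the Levi survive. Comparing with the choice $w=\wop$, the dimension difference $\Di{D_{p(w\F)}}-\Di{D_{p(\wop\F)}}$ counts exactly the extra surviving components. Running the inductive argument of (i) now inside $\Mmir{N}\cong GL_{N-1}\times GL_1$, each such extra component is added to the construction of $p(w\F)$ at one additional level of $\BBnk[\Mmir{N}]$, which is the claimed equality of levels.

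For (iii), the point is that if $w\notin A(\F)$ then at some index $i$ the position of $V_i\car w^{-1}\Umir{N}^t w$ forces an additional drop of one in the $\BBnk[\Mmir{N}]$-level beyond the count of (ii). The three excluded families $\alpha,\beta,\gamma$ in Definition \ref{afdef} are precisely the configurations in which an additional $\ossa$-exchange between a surviving root group and a complementary one is \emph{blocked} by the open-orbit structure at the relevant $V_{i_{s(i)}}$; outside $A(\F)$, the complementary configurations admit such an exchange, which I would realise via the appendix results imported from \cite{Tsiokos2}, and which lowers the level by one. I expect this to be the main obstacle, since it requires careful bookkeeping over $i_r,i_{r+1},x(i),s(i)$ and a precise matching of the negations of $\alpha,\beta,\gamma$ against concrete level drops; the shape described by (\ref{braq}) is essential for ruling out spurious configurations.

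Finally, for (iv), I would construct a path in $\BBnk[GL_n\times \Mmir{N}]$ from $\F_{\emptyset,n}\times p(w\F)$ to $(\mathrm{id}\times\tjwa)(\WW_n)\circ(\F_{\emptyset,n}\times p(w\F))$ by a sequence of elementary root-exchange moves from the appendix, adding one simple root of $U_n$ (together with its image under $\tjwa$ on the diagonal blocks of $\Mmir{N}$) at a time. Compatibility with the diagonally embedded $\tj(GL_n)$, which already normalises $\F$, keeps every intermediate AF inside $\BBnk[GL_n\times \Mmir{N}]$. Since this step reuses machinery already set up in \cite{Tsiokos2} for diagonal embeddings, (iv) should be the most mechanical of the four parts once (i)-(iii) are in place.
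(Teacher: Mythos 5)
Your high-level strategy---tracking $\BBnk$-membership inductively along the sequence $V_1,\ldots,V_t$ from Definition \ref{mainafs}, then carrying that analysis through the projection $p$ and the twist by $w$---matches the paper's, and your reading of the role of $\alpha,\beta,\gamma$ in (iii) is directionally correct. But several of the steps that actually do the work are missing.

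For (i), the paper does not simply appeal to the open-orbit, Jordan-matrix, and nilpotency facts; it explicitly constructs, for each $i$, a pair of subgroups $H_i^1,H_i^2\subseteq P_i$ that act by conjugation freely and transitively on an open subset of the terms of the $i$-th $\e$-step, with the constraint that matrices in them are trivial on the nondiagonal entries indexed by the rows (resp.\ columns) met by $V_i$-root groups on which $\F$ is nontrivial, and then checks that at least one of $H_i^1,H_i^2$ is available at every $i$. That construction is what certifies $\Xi$ as a $\BBnk[N]$-path; asserting that the hypotheses ``supply the verification needed'' leaves the actual certificate unstated.

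For (ii) and (iii), your account is too coarse. The paper introduces a notion of one $\ossa$-group being \emph{linked} to another (a chain of $V_j$-root groups with nonzero $\F$-values, at matching $y_j$), and then performs a case analysis (cases (a)--(d) with sub-cases (a.1.1), (a.1.2), (a.3.1), (d.1)--(d.3)) that assigns a level $k_i$ to each edge of the twisted path $\Xi_w$, depending on whether $V_i\car w^{-1}\Umir{N}w$ or $V_i\car w^{-1}\Umir{N}^t w$ is nontrivial, whether $\F$ vanishes on them, and how linked groups interact across diagonality levels. This gives (ii), and already gives (iii) whenever $\alpha$ fails. When $\alpha$ holds it is not enough to say an ``additional $\ossa$-exchange is admitted''; the paper constructs an explicit modified path $\Xi_w^\prime$, either by splitting a vertex into an $\e$-edge with new intermediate AFs built from subgroups $V_i^1, V_i^2$ of $V_i$ (subcase $\alpha.1$), or, when $\alpha.1$ fails, by re-choosing the whole run $V_{i_{s(i)}+1},\ldots,V_{i_{s(i)+1}}$ using reductivity of $Q_{i_{s(i)}}$, and then checks edge-by-edge that $\Xi_w^\prime$ achieves the extra $-1$. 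You correctly flag (iii) as the main obstacle, but identifying the needed bookkeeping and carrying it out are not the same thing: the bookkeeping \emph{is} the proof.

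For (iv), the paper does not add simple roots one at a time. It observes that $\Stab{GL_n\times\Mmir{N}}{p(w\F)}$ contains an appropriate copy of the mirabolic subgroup of $GL_n$, and then pushes an entire $(\F_{\emptyset,n}\rightarrow\WW_n,\BBnk[n])$-path forward through $\mathrm{id}\times\tjwa$ before composing with $\circ(\F_{\emptyset,n}\times p(w\F))$. That stabilizer fact is what guarantees the intermediate AFs stay in $\BBnk[GL_n\times\Mmir{N}]$ at level zero; the fact that $\tj(GL_n)$ normalises $\F$ does not by itself give this, since after twisting by $w$ and projecting by $p$ the relevant stabilizer is that of $p(w\F)$, not of $\F$.

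So while the skeleton of your plan is the one the paper uses, parts (i) and (iv) omit the specific constructions that make the levels come out right, and parts (ii)--(iii) omit the case analysis and the $\Xi_w^\prime$ modification that are the substance of the lemma.
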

\begin{tproof}Consider the $(\F_{\emptyset,N}\rightarrow \F)$-path $\Xi$ with labels of vertices given at increasing depth: $\F_{\emptyset,N}$, $ \F|_{V_1}$, $\F|_{V_1V_2}$, ..., $\F$. For each $1\leq i\leq t$ that is possible, let $H_i^1$ (resp. $H_i^2$) be an algebraic subgroup of $P_i$ which: \begin{itemize}
		\item acts by conjugation freely and transitively on an open subset of the set of terms of the $\e$-step of $\quasi{\Xi}$ with input AF $\F|_{V_1...V_{i-1}}$, and this open subset contains $\F|_{V_1...V_i}$.
		\item every matrix in it is trivial on all nondiagonal entries $(r,c)$ with $r,c$ belonging in \begin{equation}\label{bset}\{x:\F|_{V_i}\text{ is nontrivial on a $V_i$-root group, such that this group is nontrivial on the $x$-th row (resp. column)}\}.\end{equation}
	\end{itemize}
We see that for each $1\leq i\leq t$ at least one between $H_i^1$ and $H_i^2$ is defined, and hence $\Xi$ is a $\BBnk[N]$-path, and hence we obtain (i).	
	
	Unless more restrictively specified, let $w$ be any element as in (\ref{sumprop}).

	  Given an edge of an $\AAnk$-tree, we identify it with the $\AAnk$-path with only two vertices  for which this is the (unique) edge it has. Also if such a path is a $\BBnk[H](k)$-path (for some choice of $H$ and $k$), we also call it a $\BBnk[H](k)$-edge.
	  
	  Let $X$ be a $\ossa$-group and $X^\prime$ be a $V_i$-root group. Assume that there are numbers $l_1,...,l_k$ such that: $X$ is nontrivial on the $l_1$-th row (resp. column); for each $1<r\leq k$,  there is a $j$ with $y_{j}=y_i$, and a $V_j$-root group, say $X_r$, which is nontrivial on the $(l_{r},l_{r-1})$ entry (resp. $(l_{r-1},l_{r})$ entry) and such that $\F(L_r)\neq\{0\}$; and $X_k=X^\prime$. Then we say that $X$ is linked to $X^\prime$.
	
	 Let $\Xi_w$ be the $\AAnk$-path obtained by replacing in $\Xi$ each label $\F|_{V_1...V_i}$ with the label $p(w(\F|_{V_1...V_i}))$. Consider the $i$-th edge of $\Xi_w$; then notice that one among $wH_i^1w^{-1}$ and $wH_i^2w^{-1}$, contains a subgroup the action of which implies that this edge is a $\BBnk[\Mmir{N}](k_i)$-edge where\begin{itemize}
		\item[(a)] $k_i=0$, if one among (a.1), (a.2), and (a.3) below hold (of course (a.1.1) (resp. (a.1.2), (a.3.1)) is considered part of (a.1) (resp. (a.1), (a.3)):\begin{itemize}
			\item[(a.1)] $V_i\car w^{-1}\Umir{N} w$ and $V_i\car w^{-1}{\Umir{N}}^t w$ are both trivial; and (a.1.1) or (a.1.2) holds.
			
			\vspace{0.4mm}
			\noindent{(a.1.1)} (resp. (a.1.2))\hspace{1.5mm} In case $i=i_{s(i)}$, $Q_{i}$ is not reductive, and $V_i$ intersects nontrivially more rows (resp. columns) than columns (resp. rows), then $\F$ is trivial on all $V_i$-root groups (resp. nontrivial on a $V_i$-root group) linked to a: $w^{-1}V_jw$-root group contained in $w^{-1}{\Umir{N}}^t w$ or in $w^{-1}\Umir{N}w$ for a $j$ satisfying $i_{s(i)-1}\leq j<i_{s(i)}$. 		
			\item[(a.2)] $V_i\car w^{-1}{\Umir{N}}^t w$ is nontrivial, and either $V_{i}$  intersects nontrivially at least as many columns as rows or $\F(V_i\car w^{-1}{\Umir{N}}^t w)=\{0\}$;
			\item [(a.3)] $\F(V_i\car w^{-1}\Umir{N} w)\neq\{0\}$; and (a.3.1): in case $V_{i_{s(i)}}$ nontrivially intersects more columns than rows we also require that a $V_{i_{s(i)}}$-root group on which $\F$ is nontrivial is linked to a $w^{-1}V_iw$-root group contained in $w^{-1}\Umir{N} w$.
		\end{itemize} 
		\item[(b)] $k_i=y_i$, if: $V_i\car w^{-1}\Umir{N} w$ is nontrivial and $\F(V_i\car w^{-1}\Umir{N} w)=\{0\}$; or the negation of (a.1.2) holds. 
		\item[(c)] $k_i=y_i-y_{i_{s(i)}}$, if: $\F(V_i\car w^{-1}\Umir{N} w)\neq\{0\}$ and the negation of (a.3.1) holds.
		\item[(d)] $k_i=\sum_{i<j<i_{s(i)+1}\text{ and }x(i)\leq x(j)}y_j$ if: either (all three) (d.1) (d.2) and (d.3) hold, or the negation of (a.1.1) holds.\\ $k_i=\sum_{i<j\text{ and }x(i)=x(j)} y_j$  if: (d.1) (d.2) and the negation of (d.3) hold. \begin{itemize}\item[(d,1)]$V_i$ nontrivially intersects more rows than columns.\item[(d,2)]  $\F(V_i\car w^{-1}{\Umir{N}}^tw)\neq\{0\}$.
		\item[(d.3)] A $V_{i_{s(i)}}$-root group on which $\F$ is nontrivial is linked to a $V_i\car w^{-1}{\Umir{N}}^tw$-root group.\end{itemize}
	\end{itemize}Hence (since $\Xi_w$ is a $\BBnk[\Mmir{N}](\sum_i k_i)$-path) we obtain (ii), and the restriction of (iii) to the $w$ with the added condition: $\alpha$ does not hold (``$\alpha$" appears in the definition of $A(\F)$). Hence consider a choice of $w$ for which $\alpha$ holds. Then we may have $\sum_i k_i=\Di{D_{p(w\F)}}-\Di{D_{p(\wop\F)}}$, and hence we will describe a modification $\Xi_w^\prime$ of $\Xi_w$ which will turn out to be an $(\F_{\emptyset,\Mmir{N}}\rightarrow p(w\F),\BBnk[\Mmir{N}](\Di{D_{p(w\F)}}-\Di{D_{p(\wop\F)}}-1))$-path. Let $L_i$ be the  $V_i\car w^{-1}{\Umir{N}}^tw$-root group such that $\F(L_i)\neq\{0\}$. We define $\Xi_w^\prime$ to be obtained from $\Xi_w$ so that:\begin{itemize}\item If $\alpha$.1 holds, we replace the vertex with label $p(w(\F|_{V_1...V_i}))$ with the $\e$-edge with input and output AFs respectively being   $p(w(\F|_{V_1...V_{i-1}V^1_i}))$ and $p(w(\F|_{V_1...V_{i-1}V_1^2V_{i+1}})),$ where $V_i^1$ (resp. $V_i^2$) is  generated by all the $V_i$-root groups except the ones, say $L$, for which $\col{L}=\col{L_i}$ (resp. (again) $\col{L}=\col{L_i}$ and  $$\row{L}\in\{\row{L^\prime}:L^\prime\text{ is a $V_i$-root group with }\F(L^\prime)\neq\{0\}\}).$$	
	
\item Consider now the case that $\alpha$.1 does not hold. Hence $\F(V_{i+1}\car w^{-1}\Umir{N} w)=\{0\}$, $i=i_{s(i)+1}-1$, $Q_{i_{s(i)}}$ is reductive, $x(i_{s(i)})=x(i_{s(i)+1})$ and $y_j$ is the same for all $i_{s(i)}\leq j\leq i$. Hence there is an alternative way we can choose $V_{j}$ for $i_{s(i)}\leq j\leq i_{s(i)+1}$ (and still obtain the same $\F$), which we denote by $V_j^\prime$. Then replace in $\Xi_w$ the $\AAnk$-subpath with labels $p(w(\F|_{V_1...V_{i_{s(i)}+1}})$,...,$p(w(\F|_{V_1...V_{i_{s(i)+1}}}))$, with the $\AAnk$-path with labels \begin{equation}\label{verlab}p(w(\F|_{V_1...V_{i_{s(i)}}V^\prime_{i_{s(i)}+1}}),...,p(w(\F|_{V_1...V_{i_{s(i)}}V^\prime_{i_{s(i)}+1}...V^\prime_{i_{s(i)+1}}})).\end{equation} \end{itemize}
We see that:\begin{itemize}\item If $\alpha$.1 holds, then in the three (successive) edges in $\Xi_w^\prime$ not encountered in $\Xi_w$ we have:\begin{itemize}\item the first (that is the one with input AF being $p(w\F|_{V_1...V_{i-1}})$) and last one is a $\BBnk[\Mmir{N}]$-edge;\item the middle one is a $\BBnk[\Mmir{N}]((\sum_{j} y_j)-1)$-edge, where in case $y_i=y_{i_{s(i)}}$ (resp. $y_i>y_{i_{s(i)}}$) $j$ varies over the values satisfying $i<j\leq i_{s(i)+1}$ (resp. $i<j$ and $y_i=y_j$).\item In case $\F$ is  trivial in all $V_{i_{s(i)}}$-root groups (resp. nontrivial in a $V_{i_{s(i)}}$-root group) linked to $L_i$, the last edge is a $\BBnk[\Mmir{N}]$-edge (resp. $\BBnk[\Mmir{N}](y_{i}-y_{i_{s(i)}})$-edge).\end{itemize}\item If $\alpha$.1 does not hold, then the path with vertices as in (\ref{verlab}) is a $\BBnk[\Mmir{N}](y_{i_{s(i)+1}})$-path (because its first edge is a $\BBnk[\Mmir{N}](y_{i_{s(i)+1}})$-edge and the next ones are $\BBnk[\Mmir{N}]$-edges).\end{itemize}
Hence $\Xi_w^\prime$ is as claimed (in the first sentence it was mentioned) and (iii) is (fully) obtained.

Since $\Stab{GL_n\times \Mmir{N}}{p(w\F)}$ contains an appropriate copy of the mirabolic subgroup of $GL_n$, by applying $\mathrm{id}\times \tjwa$ and then $\circ(\F_{\emptyset,n}\times p(w\F))$ to an appropriate $(\F_{\emptyset,n}\rightarrow\WW_n,\BBnk[n])$-path, we obtain (iv).\end{tproof}
\begin{remark}In the proof of Proposition \ref{prop1} we obtain bounds on the dimensions of orbits by using Main corollary \ref{maincor} and the Lemma above. More precicely the uses of the Lemma are $\ast$ together with (iv) for $\ast=$(i), (ii), (iii). Even though not used, it may worth mentioning that the bounds obtained in the same way by only using (ii) from the Lemma above---that is, $\frac{\Di{a}}{2}\geq \Di{D_{p(\wop\F)}}$ for every $a\in\Om(p(w\F))$--- can also be obtained as follows.
	
	Among the $\ossa$-groups which are contained in $\Stab{GL_N}{\F}$ and are nontrivial in the entry $(n,i)$, let $X_i$ be the one which is nontrivial in as few entries as possible. Let $\F^\prime$ be an AF which: has domain $D_{\F^\prime}=(\prod_{1\leq i\leq n-1} X_i)D_\F$, is nontrivial in  $\prod_{1\leq i\leq n-1} X_i$, and $\F^\prime|_{D_\F}=\F$. By (i) in the Lemma above we easily obtain $\F^\prime\in\BBnk[N]$. For a $J\in\X{p(w\F)}$ choose a $J^\prime\in\X{w\F^\prime}\cap\Lie{\Pmir{N}}^t$ which projects (through the differential of $p$) to $J$. Hence (by 7.1.1 in \cite{Col}) $\Di{\Om(J)}\geq\Di{\Om(J^\prime)}-N+1$, and by also using Main corollary \ref{maincor} for $[\F\tlarrow\F^\prime]$ we are done.\end{remark}\begin{remark}\label{prema}
	The dimension bound obtained from (i) in the Lemma above and Main corollary \ref{maincor} is optimal. That is there is an $a\in\Om(\F)$ (resp. $a\in\Om(p(\wop\F))$) satisfying $\frac{\Di{a}}{2}=\Di{D_\F}$ (resp. $\frac{\Di{a}}{2}=\Di{D_{p(\wop\F)}}$); we obtain such an orbit as in the proof of Lemma 3 in \cite{Tsiokos3}. 
	
	In the picture on the left (resp. right) below, we describe a choice of $\F$ for which the dimension bound obtained as in the previous paragraph  by replacing (i) with (ii) (resp. (ii) and (iv)) is optimal. The picture rules are the same as in \cite{Tsiokos2} (see Definition 8.2.3 and the paragraph slightly below this definition finishing with ``$\triangle$fixing a picture").  
	
	More precisely, choose $a=[3,3,4,3]$ (resp. $a=[4,2,5,4]$), $w$ so that for $V$ being a positive root group in the forth row, the root group $wVw^{-1}$ is negative; then one can check that $a\in \Om(p(w\F))$ (resp. $a\in\Om(p(w(\tj(\WW_2)\circ\F)))$) and $\frac{\Di{a}}{2}=\Di{D_{p(\wop\F)}}$ (resp. $\frac{\Di{a}}{2}=\Di{D_{p(\wop(\tj(\WW_2)\circ\F))}}$). The order of the numbers inside the brackets---which does not affect the meaning of $a$--- is chosen in the way one ``encounters them" by following an appropriate initial $\AAnk$-subpath of $\sXi(\F)$ with its output AF in $\Prink[13][a]$ (resp. $\Prink[15][a]$). 
	
	\renewcommand{\di}{14}
	\begin{ti}[0.4]
		\grid{14}
		\f{1}{3/d,4/c,5/c,6/c,7/b,8/a,9/a,10/a,11/a,12/a,13/a,14/a}
	\f{2}{3/c,4/d,5/c,6/c,7/a,8/b,9/a,10/a,11/a,12/a,13/a,14/a}
	\f{3}{5/d,6/c,7/a,8/a,9/b,10/a,11/a,12/a,13/a,14/a}
	\f{4}{5/c,6/d,7/a,8/a,9/a,10/b,11/a,12/a,13/a,14/a}
	\f{5}{7/a,8/a,9/a,10/a,11/b,12/a,13/a,14/a}
	\f{6}{7/a,8/a,9/a,10/a,11/a,12/b,13/a,14/a}
	\f{7}{9/d,10/c,11/c,12/c,13/a,14/a}
	\f{8}{9/c,10/d,11/c,12/c,13/a,14/a}
	\f{9}{11/d,12/c,13/a,14/a}
	\f{10}{11/c,12/d,13/a,14/a}
	\f{11}{13/b,14/a}
	\f{12}{13/a,14/b}
	\f{13}{}
	\f{14}{}
	\draw[line width=1 pt]\mm{0}{6}--\mm{6}{6}--\mm{6}{12}--\mm{12}{12}--\mm{12}{14};	
	\draw[line width=1 pt]\mm{0}{2}--\mm{2}{2}--\mm{2}{4}--\mm{4}{4}--\mm{4}{6};
	\draw[line width=1 pt]\mm{6}{8}--\mm{8}{8}--\mm{8}{10}--\mm{10}{10}--\mm{10}{12};
	\fir{\text{A choice of }\F\in\RRR_{2,14}.}
	\end{ti}\renewcommand{\di}{16}
\begin{ti}[0.4]
	\grid{16}
	\f{1}{3/d,4/c,5/c,6/c,7/b,8/a,9/a,10/a,11/a,12/a,13/a,14/a,15/a,16/a}
	\f{2}{3/c,4/d,5/c,6/c,7/a,8/b,9/a,10/a,11/a,12/a,13/a,14/a,15/a,16/a}
	\f{3}{5/d,6/c,7/a,8/a,9/b,10/a,11/a,12/a,13/a,14/a,15/a,16/a}
	\f{4}{5/c,6/d,7/a,8/a,9/a,10/b,11/a,12/a,13/a,14/a,15/a,16/a}
	\f{5}{7/a,8/a,9/a,10/a,11/b,12/a,13/a,14/a,15/a,16/a}
	\f{6}{7/a,8/a,9/a,10/a,11/a,12/b,13/a,14/a,15/a,16/a}
	\f{7}{9/d,10/c,11/c,12/c,13/a,14/a,15/a,16/a}
	\f{8}{9/c,10/d,11/c,12/c,13/a,14/a,15/a,16/a}
	\f{9}{11/d,12/c,13/a,14/a,15/a,16/a}
	\f{10}{11/c,12/d,13/a,14/a,15/a,16/a}
	\f{11}{13/b,14/a,15/a,16/a}
	\f{12}{13/a,14/b,15/a,16/a}
	\f{13}{15/b,16/a}
	\f{14}{,15/a,16/b}
	\f{15}{}
	\f{16}{}
	\draw[line width=1 pt]\mm{0}{6}--\mm{6}{6}--\mm{6}{12}--\mm{12}{12}--\mm{12}{14}--\mm{14}{14}--\mm{14}{16};
	\draw[line width=1 pt]\mm{0}{2}--\mm{2}{2}--\mm{2}{4}--\mm{4}{4}--\mm{4}{6};
	\draw[line width=1 pt]\mm{6}{8}--\mm{8}{8}--\mm{8}{10}--\mm{10}{10}--\mm{10}{12};
	\fir{\text{A choice of }\F\in\RRR_{2,16}.}
\end{ti}
\end{remark}
In the rest of the present paper we concentrate on the $I$ ``belonging" in a set $\SSS$ defined below, for which the choices of $\phi$ are products of certain Fourier coefficients of automorphic forms. The scare quotes on the word belonging are because $\SSS$ is a set of integral expressions (precisely described as tuples and in a way specific to the needs of the present paper).
\begin{defi}[$\SSS$]\label{sssdefi} We denote by $\SSS$ the set of tuples of the form $$\eI:=(\F_1,...,\F_{k-1},\F,\pi_{\mathrm{cusp}},\pi_1,...,\pi_{k-1},\pi) $$ where: $\F\in\RRR_{n,N}$, $\pi_{\mathrm{cusp}}$ is a $GL_n(\A)$-cuspidal automorphic representation, $\pi$ is a $GL_N(\A)$-automorphic representation,  $k$ is a positive integer; and for $1\leq i\leq k-1$ we define positive integers $N_i$, AFs $\F_i\in\BBnk[N_i]$, embeddings $\tj_i: GL_n\rightarrow GL_{N_i}$ for which $\tj_i(GL_n)\in\Stab{GL_{N_i}}{\F_i}$,  and $GL_{N_i}(\A)$-automorphic representations $\pi_i$ such that the vector space $\{\text{restriction of }\F(\varphi_i)\text{ on }\tj_i(GL_n(\A)):\varphi_i\in\pi_i\}$ is neither trivial nor one dimensional. 
	
	For the tuple $\eI$ in $\SSS$ we obtain the functional $I:\pi_{\mathrm{cusp}}\times \prod_{i\leq i\leq k-1}\pi_i\times\pi\rightarrow\mathbb{C}$, which maps each tuple $(\varphi,\varphi_1,...,\varphi_{k-1},E)$ in its domain to the integral \begin{equation}\label{ari}I(\varphi,\varphi_1,...,\varphi_{k-1},E)=\int_{GL_n(\kkk)Z_n(\A)\s GL_n(\A)}\varphi(g)\left(\prod_{1\leq i\leq k-1}\F_i(\varphi_i)(\tj_i(g))\right)\F(E)(\tj(g))dg. \end{equation}  
	
	 Whenever we have a text of the form ``$I=\ast$" for $\ast$ being any text, we mean ``$I(\varphi,\varphi_1,...,\varphi_{k-1},E)=\ast$ for all $\varphi,\varphi_1,...,\varphi_{k-1},E$ in the domain of $I$".
 
To be clear, whenever an $\eI\in\SSS$ appears, all notations within the present definition (e.g. $I$) are adopted for this choice of $\eI$. \end{defi}
\begin{defi}[$\UUU{H}$, $\Om(X)$, blocks]\label{lakm}
	Let $H=\prod_{1\leq i\leq k}GL_{n_i}$ (for some positive integers $k,n_1,...,n_k$). Then we refer to each $GL_{n_i}$ as a block. We denote by $\UUU{H}$ the set of nilpotent orbits of the action by conjugation of $H$ on $\Lie{H}.$ We mostly write $\UUU{n}$ instead of $\UUU{GL_n}$ and we identify its elements with positive integer partitions $n=n_1+...$ which we denote by $[n_1,...]$.  For $X\in\Lie{H}$ we denote by $\Om(X)$ the orbit in $\UUU{H}$ that contains $X$.  
	
	As usual the order we use for $\UUU{H}$ is: given two different $a,b\in\UUU{H}$ with $a=[a_{1,1},a_{1,2}...]\times[a_{2,1},...]\times...$ and $b=[b_{1,1},b_{1,2}...]\times[b_{2,1},...]\times...$, we say  $a$ is bigger from $b$ if and only if $\sum_{1\leq j\leq k}a_{i,j}\geq\sum_{1\leq j\leq k}b_{i,j}$ (for every possible choice of $i,k$).
	
	For an $a\in\UUU{H}$ consider the (again) form $a=a_1\times...\times a_k,$ for $a_i\in\UUU{n_i}$; then if $a_i$ is trivial (resp. nontrivial) we say that $a$ is trivial (resp. nontrivial) on the $i$-th block. Usually we prefer to call leftmost (resp. rightmost) block the first (resp. last) block. \end{defi}
\begin{defi}[$\Di{\pi}$]Let $\pi$ be a $GL_N(\A)$-automorphic representation. Let $\Om(\pi)$ be the set consisting of the maximal orbits of $$\{\Om(J_\F):\F\in\Pri[,N]\text{ and }\F(\pi)\neq 0\} $$ where the definition of the matrix $J_\F$ (the set $\Prink[N]$) is recalled (and extended) in Definition \ref{mtrox} (resp. \ref{pridefi}). The equivalence of this definition of $\Om(\pi)$ with the one found for example in \cite{Ginzsmall} follows from the proof of Corollary 9.3.5 in \cite{Tsiokos2}.
	
	To my knowledge, the proof that $\Om(\pi)$ is a singleton has not appeared for all choices of $\pi$; hence we fix throughout the paper a choice, say $a_{\pi}$, in $\Om(\pi)$, and we define $\Di{\pi}:=\frac{\Di{a_{\pi}}}{2}$.\end{defi}
\begin{defi}[Dimension equation of D. Ginzburg, $\sd(\eI)$]\label{dequ}D. Ginzburg has formulated a dimension equation which is satisfied by many familiar Rankin-Selberg integral expressions, and is significantly related to them being factorizable. Without making any change to the equation, we extend the set of integral expressions for which D. Ginzburg formulated\footnote{By ``formulate" I only mean to precisely state the equation (it does not have to be satisfied). The integral expressions for which D. Ginzburg formulated the equation include the ones in \cite{GinzAn} in (2).} it, so that it contains the elements in $\SSS$ when they are viewed as integral expressions in the way ``suggested" in Definition \ref{sssdefi}. Main corollary \ref{maincor} justifies to some extent not making any change. Let $\eI\in\SSS$. We define $\sd(\eI)$ to be the difference of the two sides of this equation, and hence D. Ginzburgs equation reads: $\sd(\eI)=0$. Since cuspidal representations are generic we have: 
\begin{equation}\label{gdeqd}\sd(\eI)=\left(\sum_i \Di{\pi_i}\right)+\Di{\pi}-\frac{n(n+1)}{2}+1-\left(\sum_i\Di{D_{\F_i}}\right)-\Di{D_\F}.\end{equation}  \end{defi}

\begin{prop}\label{prop1}
	Consider an $\eI\in\SSS$ such that the elements in $\pi$ admit an absolutely convergent Eisenstein series expansion over $\Pmir{N}$. Let $A_0$ be the set consisting of the $w$ as in \ref{sumprop} for which $\Om((\prod\tj_i\times \tjwa)(\WW_n)\circ( \prod_i\F_i\times p(w\F)))$ contains an orbit with dimension $2\Di{D_{p(\wop\F)}}+2\sum_i(\Di{D_{\F_i}})+n(n-1)$ (of course by Lemma \ref{Blem} and Main corollary \ref{maincor}, the dimension couldn't be any smaller, and  $A_0\subseteq A(\F)$). \begin{itemize}\item[i] If $\sd(\eI)=0$ and $k=1$, then $$I=\sum_{w\in A_0}\int_{Y_{w}(\A)}(-\WW_n)(\varphi)(g)  p(w(\tj(\WW_n)\circ\F))( f)(w h\tj(g))dhdg. $$ \item[ii] If $\sd(\eI)=0$, $k=2$, and $(\tj_1(\WW_n)\circ\F_1)(\pi_1)=0$, then $$I=\sum_{w\in A_0}\int_{Y_{w}(\A)}(-\WW_n)(\varphi)(g)(\tj_1(\WW_{n,1}^{\deg})\circ\F_1)(\varphi_1)(g) p(w(\tj(\WW_n^{\deg})\circ\F))(f)(w h\tj(g))dhdg,$$where $\WW_{n,1}^{\deg}$ (resp. $\WW_{n}^{\deg} $) is an AF with domain $U_n$ which is nontrivial exactly on the root groups $U_{n,(1,2)},U_{n,(3,4)},...$ (resp. $U_{n,(2,3)}, U_{n,(4,5)},...$). \item[iii] If $\sd(\eI)=0$, $k=2$, and $(\tj_1(\WW_n)\circ\F_1)(\pi_1)\neq 0$, then$$I=\sum_{w\in A_0}\int_{Y_{w}(\A)}(-\WW_n)(\varphi)(g)(\tj_1(\WW_n)\circ\F_1)(\varphi_1)(g) p(w(\F))(f)(w h\tj(g))dhdg.$$\item[iv] If $\sd(\eI)<0$ or $k>2$, then $I=0$.\end{itemize}
\end{prop}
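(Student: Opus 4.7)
My plan is to unfold $I$ via Proposition~\ref{ftprop} applied with $\phi(g)=\prod_{1\le i\le k-1}\F_i(\varphi_i)(\tj_i(g))$ and then analyse each of the finitely many $w$-summands by a single dimension count that combines Lemma~\ref{Blem} with Main corollary~\ref{maincor}. Concretely, the $w$-summand is interpreted as a period on $(Z_n\backslash GL_n\times\prod_iGL_{N_i}\times GL_N)(\A)$ of $\varphi\cdot\prod_i\varphi_i\cdot f$ after Fourier expansion along the combined AF
\[
\WW_n\;\times\;\prod_i\bigl(\tj_i(\WW_n)\circ\F_i\bigr)\;\times\;\bigl(\tjwa(\WW_n)\circ p(w\F)\bigr),
\]
restricted along the diagonal $(\prod_i\tj_i)\times\tjwa$. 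Cuspidality of $\pi_{\mathrm{cusp}}$ inserts the $\WW_n$ on the $\varphi$-slot for free, while Lemma~\ref{Blem}(iv) produces the analogous $\WW_n$-factors on the $\varphi_i$- and $f$-slots.

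By Lemma~\ref{Blem}(i)--(iv), the inclusion $\F_i\in\BBnk[N_i]$ (built into the definition of $\SSS$), and a trivial $\BBnk[GL_n]$-path from $\F_{\emptyset,n}$ to $\WW_n$, the tensor product of these paths is a $\BBnk[GL_n\times\prod_iGL_{N_i}\times\Mmir{N}]$-path to the combined AF above of level at most $\Di{D_{p(w\F)}}-\Di{D_{p(\wop\F)}}-[w\notin A(\F)]$. Main corollary~\ref{maincor} (together with $\Di{\pi_{\mathrm{cusp}}}=\tfrac{n(n-1)}{2}$, which holds since $\pi_{\mathrm{cusp}}$ is generic) then forces, for any nonvanishing $w$-summand,
\[
k\cdot\tfrac{n(n-1)}{2}-\tfrac{n(n+1)}{2}+1+[w\notin A(\F)]\;\leq\;\sd(\eI)+\bigl(\Di{D_\F}-\Di{D_{p(\wop\F)}}\bigr),
\]
after substituting formula \eqref{gdeqd}.

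The four cases fall out as follows. For $\sd(\eI)<0$ or $k\ge 3$, the left-hand side, which grows quadratically in $k$, exceeds every admissible value of the right-hand side (using Observation~\ref{expr} to bound $\Di{D_\F}-\Di{D_{p(\wop\F)}}$ via the block structure of $\F$), so each summand vanishes, giving (iv). For $k=1,\ \sd(\eI)=0$, Lemma~\ref{Blem}(iii) eliminates every $w\notin A(\F)$, and the orbit condition in the definition of $A_0$ further restricts to $w\in A_0$; the cuspidal Whittaker expansion of $\varphi$ on the $f$-side produces the claimed $p(w(\tj(\WW_n)\circ\F))$, yielding (i). For $k=2,\ \sd(\eI)=0$, the inequality is tight and an extra $\tfrac{n(n-1)}{2}$ of Whittaker dimension must appear on the $\varphi_1$- and $f$-sides combined; there are exactly two saturating splittings. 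Either the full $\WW_n$ is inserted on the $\varphi_1$-side, which is possible iff $(\tj_1(\WW_n)\circ\F_1)(\pi_1)\neq 0$ and leaves only $p(w\F)$ on the $f$-side (case (iii)); or, when that vanishes, the Whittaker splits into its two complementary degenerate halves $\WW_{n,1}^{\deg}$ (supported on simple root groups $U_{(1,2)},U_{(3,4)},\ldots$) on the $\varphi_1$-side and $\WW_n^{\deg}$ (supported on $U_{(2,3)},U_{(4,5)},\ldots$) on the $f$-side (case (ii)), using that these two AFs together exhaust $\WW_n$ as a product in the sense of Definition~\ref{pstr}.

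The main obstacle is the $k=2$ dichotomy: one must verify, via Lemma~\ref{Blem}(iv) and Main corollary~\ref{maincor}, that the only saturating splittings are the two described — ruling out all intermediate degenerate Whittakers by orbit-dimension bounds — and that the swap of simple root groups $U_{(2i-1,2i)}\leftrightarrow U_{(2i,2i+1)}$ is the precise mechanism that transfers Whittaker dimension between the two sides. A secondary subtlety for parts (i)--(iii) is the identification of $A_0$ inside $A(\F)$, which rests on computing the orbit of $(\prod_i\tj_i\times\tjwa)(\WW_n)\circ(\prod_i\F_i\times p(w\F))$ via the optimality arguments in the Remark following Lemma~\ref{Blem}.
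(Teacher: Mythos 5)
Your high-level framing is correct as far as it goes: you unfold via Proposition~\ref{ftprop}, encode the unfolded Fourier coefficient as an AF on $GL_{N_1}\times\dots\times GL_{N_{k-1}}\times\Mmir{N}$, and invoke Lemma~\ref{Blem} together with Main corollary~\ref{maincor} to get a lower bound on orbit dimension. This correctly reproduces the paper's steps~I and~II (elimination of $w\notin A_0$, and vanishing when $\sd(\eI)<0$). But the rest of the proposal has two genuine gaps.

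First, for case~(iv) with $\sd(\eI)\ge 0$ and $k>2$, your dimension count does not close. The quantity $\Di{D_\F}-\Di{D_{p(\wop\F)}}$ on your right-hand side is not uniformly bounded (it can be as large as $N-1$), so the ``left-hand side grows quadratically in $k$'' argument fails already at $k=3$ for small $n$. The paper instead proves a structural \emph{Claim}: for $w\in A_0$ any orbit in $\Omsd{(\prod_i\tj_i\times\tjwa)(\WW_n)\circ(\prod_i\F_i\times p(w\F))}$ is nontrivial on at most two blocks, one of them necessarily the rightmost; combined with the requirement in the definition of $\SSS$ that each $\F_i(\varphi_i)|_{\tj_i(GL_n(\A))}$ be at least two-dimensional (so each $GL_{N_i}$-block must carry a nontrivial orbit component), this forces $I=0$ once $k>2$. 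That Claim is not a dimension count: its proof is the whole exchange argument with the embeddings $\tjh$, $\tjhp$, $\tjhpp$, the $(\cdot,\e)$-quasipaths, equations (\ref{treesf})--(\ref{treesf3}), Exchange corollary~\ref{exchange}, Theorem~\ref{general}, and Proposition~7.1 of \cite{Tsiokos2}. None of this is set up in your proposal.

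Second, for the $k=2$ dichotomy you assert that ``there are exactly two saturating splittings'' transferring Whittaker dimension between the $\varphi_1$- and $f$-sides, landing on $\WW_{n,1}^{\deg}$ and $\WW_n^{\deg}$. That is the correct output, but the mechanism is not a splitting of $\WW_n$ as a product in the sense of Definition~\ref{pstr}: $\WW_{n,1}^{\deg}$ and $\WW_n^{\deg}$ do not multiply to $\WW_n$. What the paper actually proves is the inclusion of orbit sets
\[
\Omsd{(\tj_1\times\tjwa)(\WW_n)\circ(\F_1\times p(w\F))}\cap\Om\;\subseteq\;\Om\bigl((\tj_1(\WW_{n,1}^{\deg})\circ\F_1)\times p(w(\tj(\WW_n^{\deg})\circ\F))\bigr),
\]
obtained by iterating (\ref{treesf}), (\ref{treesf2}), (\ref{treesf3}) while replacing $n$ by $n-1,\ n-2,\dots$ and alternating which block is treated as ``leftmost''; the alternation is exactly what produces the complementary even/odd root-group supports. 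This iterative exchange step is the heart of cases~(ii)--(iii) and the Claim alike, and the proposal has no substitute for it.
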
\begin{remark}
Some readers may prefer to only consider this proposition in the special case $N_1=...=N_{k-1}=n$ (and hence $\F_1=...=\F_{k-1}=\F_{\emptyset,n}$) due to the following expectation:\begin{expectation}
	Consider a $\kkk$-AF $\F\in\BBnk[N]$, and a $GL_N(\A)$-automorphic representation $\pi$. We assume that a $\kkk$-copy of $GL_n$ in $GL_N$ is contained in $\Stab{GL_N}{\F}$, and that $0\leq \Di{\F}-\Di{\pi}\leq \frac{n(n-1)}{2}$. We identify $GL_n$ with this copy. Then there is a $GL_n(\A)$-automorphic representation  $\pi^\prime$ such that $$\Di{\pi^\prime}=\Di{\pi}-\Di{D_\F} $$ and for $\F^\prime\in\Pri[,n][\Om(\pi^\prime)]$ we have $\{(\F^\prime\circ\F)(\varphi):\varphi\in\pi\}=\{\F^\prime(\varphi^\prime):\varphi^\prime\in\pi^\prime\}. $\end{expectation}
\end{remark}\begin{proof}[\textbf{Proof of Proposition \ref{prop1}}]To be clear, all nilpotent orbits that are mentioned in the present proof are in $\UUU{GL_{N_1}\times...GL_{N_{k-1}}\times\Mmir{N}}$. By Lemma \ref{Blem} and  Main corollary \ref{maincor} we have:\begin{itemize}
	\item[I] all the terms of $I$ in (\ref{sumprop}) except possibly the ones corresponding to $w\in A_0$ vanish;
	\item[II] if $\sd(\eI)<0$, the terms corresponding to $w\in A_0$  also vanish;
	\item[III] For the special case of the proposition in which $\sd(\eI)=0$ and $k>2$, we are left with proving the following claim:
	
	\vsp\textbf{Claim. }\textit{For $w\in A_0, $ fix an orbit in \begin{equation}\label{ommg}\Omsd{\left(\left(\prod_i\tj_i\right)\times\tjwa\right)(\WW_n)\circ\left(\left(\prod_i\F_i\right)\times p(w\F)\right)}.\end{equation} Then there are at most two blocks on which this orbit is nontrivial; and if they are two, one of them is the rightmost one.
}	\item[IV] if $\sd(\eI)=0$ and $k=2$, it is sufficient for $w\in A_0$ to find an $(j_1\times\tjwa)(\WW_n)\circ(\F_1\times p(w\F))\rightarrow(\tj_1(\WW_{n,1}^{\deg})\circ\F_1)\times p(w(\tj(\WW_{n}^{\deg})\circ\F)))$-path---it is trivial to find one (only use $\e$-steps)---and prove that \begin{equation}\label{ommomm}\Omsd{(\tj_1\times\tjwa)(\WW_n)\circ(\F_1\times p(w\F))}\cap\Om\subseteq\Om((\tj_1(\WW_{n,1}^{\deg})\circ\F_1)\times  p(w(\tj(\WW_{n}^{\deg})\circ\F)))\end{equation} where $\Om$  consists of the orbits in $\UUU{GL_{N_1}\times\Mmir{N}}$ which are nontrivial in both blocks.
\end{itemize}

We are left with proving: the Claim, and (\ref{ommomm}) (with the conditions in IV). We make plenty of use of ``resp." and any two uses in different sentences are independent. In case $\F$ is nontrivial (resp. trivial) let $H:=\prod_{1\leq i\leq k-1}GL_{n}\times \Pmir{n}$ (resp. $H:=\prod_{1\leq i\leq k-1}GL_{n}\times GL_{n-1}$). Let $\tjh$ be the embedding of $U_n$ in $H$ given by $\tjh(u)=(u,...,u,u_k)$ where in case $\F$ is nontrivial we have $u_k:=u$ and in case $\F$ is trivial we define every entry, say $(i,j)$, of $u_k$ to be the $(i+1,j+i)$ entry of $u$.  In case $\F$ is nontrivial or $k>2$ (resp. $\F$ is trivial and $k=2$), we define:\begin{itemize}
	\item $R_1,R_2,...$ to respectively be $U_{n,(1,n)}$, $U_{n,(1,n-1)}$,...,$U_{n,(1,2)}$ (resp. $U_{n,(2,n)}$, $U_{n,(2,n-1)}$,...,$U_{n,(2,3)}$);
	\item $\tjhp$ and $\tjhpp$ to be the embeddings of $U_n$ in $H$ such that they are both equal to $\tjh$ when we restrict to any root group except for $R_1,R_2,...$, and (for each i):  $\tjhp(R_i)$  differs from $\tjh(R_i)$ only on the leftmost block of $H$ and in the leftmost block is trivial;  $\tjhpp(R_i)$ is trivial on all the blocks except the leftmost block, and in the leftmost block is equal to $R_i$.
\end{itemize}  We define by $\tjhp(\WW_n)^+$ (resp. $\tjhpp(\WW_n^+)$) to be the trivial extension of $\tjhp(\WW_n)$ (resp. $\tjhpp(\WW_n)$) with domain $\tjhp(U_n)\tjhpp(U_n)$. Consider the $(\tjh(\WW_n),\e)$-quasipath (resp. $(\tjhp(\WW_n),\e)$-quasipath; $(\tjhpp(\WW_n),\e)$-quasipath) over $\tjhpp(R_1)$, $\tjhpp(R_2),...$  (resp. $\tjhpp(R_1),\tjhpp(R_2),...$ ; $\tjhp(R_1),\tjhp(R_2),...$). Notice that in any $\e$-step in any of these three quasipaths, all the terms except the ones that are constant with respect to a $\ossa$-group are conjugate by an element in $H$. Hence we obtain:  \begin{equation}\label{treesf}\tjh(\WW_n)\riff{H}\{\tjhp(\WW_n)^+,\tjhpp(\WW_n)^+\}\qquad\tjhp(\WW_n)\riff{H}\tjhp(\WW_n)^+\qquad\tjhpp(\WW_n)\riff{H}\tjhpp(\WW_n)^+. \end{equation}Let $\XXX^\prime$ be the set consisting of the AFs in $\AAnk[H]$  which are nontrivial on the leftmost block. In case $k=2$ (resp.  $k>2$) let $\XXX^{\prime\prime}$ be the set consisting of the AFs in $\AAnk[H]$  which are nontrivial on the rightmost block (nontrivial on all the blocks except possibly the leftmost and the rightmost). By using Proposition 7.1 in \cite{Tsiokos2} and Theorem \ref{general} we obtain:\begin{equation}\label{treesf2}\text{if $\F$ is nontrivial or $k>2$ then }\quad\tjhp(\WW_n)\rifff{H}{\XXX^\prime}\emptyset; \end{equation}  \begin{equation}\label{treesf3}\text{if $\F$ is trivial or $k>2$ then }\tjhpp(\WW_n)\rifff{H}{\XXX^{\prime\prime}}\emptyset. \end{equation} Make a choice of $\AAnk$-trees which justifies\footnote{That is, as in Definition \ref{lrif}.} the equations  (\ref{treesf}) (\ref{treesf2}) (\ref{treesf3}); and for each one of them, say $\Xi$, consider the tree\footnote{Here (in contrast to Definition \ref{pstr}) $\left(\prod_{i}\tj_i\right)\times\tjwa $ is defined in $H$ and maps each $(h_1,...,h_k)$ to $(j_1(h_1),...,j_{k_1}(h_{k-1}),\tjwa(h_k))$.} $\left(\left(\prod_{i}\tj_i\right)\times\tjwa\right)(\Xi)\circ(\F_1\times...\times\F_{k-1}\times p(w\F))$. Then by Exchange corollary \ref{exchange} (and proving\footnote{Use (ii) in Lemma \ref{Blem} an then continue with an $\AAnk$-path obtained by applying $\circ((\prod_i\F_i)\times p(w\F))$ to an embedding of an appropriate $(\F_{\emptyset,n}\rightarrow \WW_n,\BBnk[n])$-path.} containments in $\BBnk[GL_{N_1}\times... GL_{N_{i-1}}\times \Mmir{N}](\Di{D_{p(w\F)}}-\Di{D_{p(\wop\F)}})$) we directly obtain the Claim,  and we also obtain (\ref{ommomm}) as follows. If $k=2$ and $\F$ is trivial,  by successive uses of (\ref{treesf}) and (\ref{treesf3}) for $n$ replaced by $n,n-1,...$, (also when $n$ is replaced by $n-1,n-3,...$ we  interchange the right with the left), we obtain \begin{equation}\label{ptpa}\Omsd{(\tj_1\times\tjwa)(\WW_n)\circ(\F_1\times p(w\F))}\cap\Om=\Omsd{(\tj_1(\WW_{n,1}^{\deg}|^{\mathrm{gen}})\circ\F_1)\times  p(w(\tj(\WW_n^{\deg}|^{\mathrm{gen}})\circ\F)))} \end{equation}
where $\WW_{n,1}^{\deg}|^{\mathrm{gen}} $  (resp. $\WW_n^{\deg}|^{\mathrm{gen}}$) is the restriction of $\WW_{n,1}^{\deg}$ (resp. $\WW_n^{\deg}$) on the rows on which it is nontrivial. In case $k=2$ and $\F$ is nontrivial, we again obtain (\ref{ptpa}), by starting with a use of (\ref{treesf}) and (\ref{treesf2}) and then proceeding as in the previous sentence. From the last two sentences, from $(\tj_1(\WW_{n,1}^{\deg}|^{\mathrm{gen}})\circ\F_1)\times p(w(\WW_n^{\deg}|^{\mathrm{gen}}\circ\F)))\rif (\tj_1(\WW_{n,1}^{\deg})\circ\F_1)\times p(w(\WW_n^{\deg}\circ\F)))$ (which follows trivially), and since $\Omsd{\XX}\subseteq\Omm{\XX}$ for any $\XX$ (Main corollary \ref{maincor}), we obtain (\ref{ommomm}).\end{proof}
\begin{remark}The arguments in the proof above are similar to arguments found inside the proofs of Theorems 8.3.11, 8.3.12 and 8.3.18 in \cite{Tsiokos2}. For example---in relation to (\ref{treesf})---see Property 2 inside the proofs of these theorems\footnote{Note that the proof of Property 2.1 becomes simpler (and more similar) by replacing the $\eu$-step there with a $\co$-step.}. Also, in the special case that $\F$ is trivial, the proposition is implied by Theorems 8.3.11 and 8.3.12 in \cite{Tsiokos2} as follows.
	
	Let $N_{\mathrm{hor}}:=(k-1)n+n-1$, and $j:H\rightarrow GL_{N_{\mathrm{hor}}}$ be the embedding given by $$(g_1,...,g_k)\rightarrow\begin{pmatrix}
	g_1&&\\&\ddots&\\&&g_k
	\end{pmatrix}. $$ We adopt Definition 8.3.9 in \cite{Tsiokos2} for $l=k-1$ and $N=N_{\mathrm{hor}}$; therefore $j_{\FF_{n,k,k-1}}$ is a restriction of $j\cdot\tjh$. Recall from the appendix the information about $\X{...}$ given in Definition \ref{mtrox} and in Proposition-Definition \ref{AOF}. Notice that for every $J\in\X{\tjh(\WW_n)} $, we can find (and we fix) a $J_{N_{\mathrm{hor}}}\in\X{\F_{n,k,k-1}}$ which belongs to the Lie algebra of the $GL_{N_{\mathrm{hor}}}$-parabolic with Levi $j(H)$, and with projection to  $\Lie{j(H)}$ being equal to $j(J)$. We have $$\Di{D_{\tjh(\WW_n)}}+\Di{U_P}=\Di{D_{\F_{n,k,k-1}}}\leq\frac{\Di{\Om(J_{N_{\mathrm{hor}}})}}{2}\leq\frac{\Di{\Om(J)}}{2}+\Di{U_P}$$ where the first (resp. second) inequality follows for example from A in Main corollary \ref{maincor} (resp. 7.1.1 in \cite{Col}).  Therefore if $\Di{D_{\tjh(\WW_n)}}=\frac{\Di{\Om(J)}}{2}$ the inequalities above become equalities and  hence:\begin{itemize}\item By Main corollary \ref{maincor} we have $\Om(J_{N_{\mathrm{hor}}})\in\Omm{\F_{n,k,k-1}}$; the set $\Omm{\F_{n,k,k-1}}$  is calculated in Theorems 8.3.11 and 8.3.12 in \cite{Tsiokos2}.\item By using 7.1.1 in \cite{Col} we obtain that $\Om(J_{N_{\mathrm{hor}}})$ is induced from $\Om(J)$.\end{itemize} By using these theorems from \cite{Tsiokos2} and 7.2.5 in \cite{Col} we see how and which among the orbits in  $\Omm{\F_{n,k,k-1}}$ are induced from $\Lie{H}$ and hence we are done.  
\end{remark}
\begin{prop}\label{elem}Let $H:=GL_{n_1}\times... GL_{n_k}$ and $n:=\max\{n_1,...,n_k\}$. Let (for $1\leq i\leq k$) let $j_i$ be an embedding of $GL_{n_i} $ on $GL_n$ given by  $j_i(g)=\begin{pmatrix}I_{x_i}&&\\&g&\\&&I_{n-x_i-n_i}\end{pmatrix}$ for each $g\in GL_{n_i}$ (and some choice of $x_i$), and let $\tjh:=j_1^{-1}\times...\times j_k^{-1}$. Let $\F\in\AAA(U_n)$,  $[b_1,...]:=\Om(J_\F)$, and $a:=[a_{1,1},...]\times...\times[a_{1,k},...]\in\UUU{H}$ be such that there is an $(\tjh(\F)\rightarrow \Prink[H][a])$-path. Then $$\sum_i (b_i-1)\leq \sum_{i,j} (a_{i,j}-1). $$
\end{prop}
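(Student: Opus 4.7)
The plan is to rephrase the desired inequality as a comparison of matrix ranks and then leverage the $\AAnk$-path hypothesis via Main corollary~\ref{maincor}. Since the $b_i$ form a partition of $n$ and the $a_{i,j}$ form partitions of the $n_i$, one has the identities $\sum_i(b_i-1)=n-\ell(b)=\mathrm{rank}(J_\F)$ and $\sum_{i,j}(a_{i,j}-1)=\sum_i(n_i-\ell(a_i))=\mathrm{rank}(J_a)$, where $\ell$ denotes the number of parts and $J_a\in\Lie(H)$ is a block-diagonal nilpotent of Jordan type $a$. Thus the statement reduces to the rank inequality $\mathrm{rank}(J_\F)\le\mathrm{rank}(J_a)$.

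First I would reduce to the case in which $\F$ is a canonical Whittaker-type representative of its orbit, by prepending an auxiliary $\AAnk$-path $\F\to\F_b$ with $\F_b\in\Prink[GL_n][b]$ and pushing it through $\tjh$; this is harmless because it does not alter $\Om(J_\F)$ and, by Exchange corollary~\ref{exchange}, the concatenation still terminates in $\Prink[H][a]$ for the same $a$. With $J_\F$ in Jordan normal form, the heart of the argument is a monotonicity lemma along the given path $\tjh(\F)\to\Prink[H][a]$: the $\mathrm{e}$- and $\mathrm{c}$-steps preserve the associated nilpotent orbit up to conjugation, while the $\mathrm{f}$- and $\mathrm{ue}$-steps can only enlarge it in the dominance order. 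A larger orbit in the dominance order means fewer Jordan blocks and hence larger rank, so the quantity $\sum_i \mathrm{rank}(J_{(i)})$, where $J_{(i)}$ is the $i$-th block of the associated nilpotent on $H$, is non-decreasing along the path; consequently $\mathrm{rank}(J_a)\ge\sum_i\mathrm{rank}(J_\F|_{j_i(GL_{n_i})})$.

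What remains is then the matrix-theoretic inequality $\mathrm{rank}(J_\F)\le\sum_i\mathrm{rank}(J_\F|_{j_i(GL_{n_i})})$. In the Whittaker-normal form each nonzero entry of $J_\F$ lies on the first subdiagonal and contributes $1$ to the left-hand side, while the right-hand side receives a contribution from that same entry whenever it sits inside some block $j_i(GL_{n_i})$; so the inequality follows from a direct entry-by-entry count, provided every such entry is covered by at least one block.

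The main obstacle is precisely this coverage statement: justifying that, under the path hypothesis, every nonzero subdiagonal entry of $J_\F$ must lie in some $j_i(GL_{n_i})$. I would argue by contradiction. An uncovered entry corresponds to a simple root group on which $\F$ is nontrivial but which is killed by $\tjh$, so it contributes to $J_\F$ while contributing nothing to $J_{\tjh(\F)}$; together with the monotonicity lemma and the structural constraints on $\BBnk[H]$-paths recalled in the appendix, this would force $\sum_i\mathrm{rank}(J_{(i)})<\mathrm{rank}(J_\F)$ at the start of the path and prevent it from ever reaching an $a$ of the given shape, against the hypothesis. Making this last chain of implications precise, in particular controlling the interaction of the block embeddings $j_i$ when they overlap, is the technically delicate part of the proof.
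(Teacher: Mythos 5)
The reformulation in terms of ranks is a sound first observation: indeed $\sum_i(b_i-1)=\mathrm{rank}(J_\F)$ and $\sum_{i,j}(a_{i,j}-1)$ is the rank of any representative of $a$, so the goal is a rank comparison. However, the two-step chain you propose, namely $\mathrm{rank}(J_\F)\le\sum_i\mathrm{rank}\bigl(J_{j_i^{-1}(\F)}\bigr)\le\mathrm{rank}(J_a)$, breaks down at the second inequality, and the first one (your ``coverage'' concern) is in fact a non-issue.

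On coverage: since $n=\max\{n_1,\ldots,n_k\}$, some $n_i$ equals $n$ and then the corresponding $j_i$ is forced to be the identity embedding; that block contains every subdiagonal entry, so $\mathrm{rank}(J_\F)\le\sum_i\mathrm{rank}\bigl(J_{j_i^{-1}(\F)}\bigr)$ holds trivially with no need for the argument by contradiction you sketch. The obstacle you flag as ``technically delicate'' is not where the difficulty lies.

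The genuine gap is that $\tjh(\F)$ does \emph{not} have domain $\prod_i U_{n_i}$ when the block embeddings overlap: its domain is the image $\tjh(U_n)$, a proper subgroup of $\prod_i U_{n_i}$ in which entries shared by two blocks are identified. Consequently $\tjh(\F)\notin\Prink[H]$, there is no canonical ``$J_{(i)}$ block of the associated nilpotent'' to start the monotonicity argument from, and the minimal orbits in $\Om(\tjh(\F))$ can have total rank strictly smaller than $\sum_i\mathrm{rank}\bigl(J_{j_i^{-1}(\F)}\bigr)$. For a concrete counterexample to your second inequality: take $n=3$, $k=2$, $n_1=3$ (so $j_1=\mathrm{id}$), $n_2=2$ with $x_2=0$, and $\F$ the generic character with $J_\F=E_{21}+E_{32}$. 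Then $\mathrm{rank}(J_{j_1^{-1}(\F)})+\mathrm{rank}(J_{j_2^{-1}(\F)})=2+1=3$, but one computes that $[2,1]\times[2]$ lies in $\Om(\tjh(\F))$ (set $(J_1)_{21}=0$, $(J_2)_{21}=1$, and choose an upper-triangular completion making $J_1$ rank one nilpotent), which has total rank $2<3$. The proposition still holds there, with equality $2=2$, but your step 2 does not: the freedom to ``move'' a shared entry from one block to another is exactly what makes the initial rank smaller than the naive sum, and is exactly what the argument must confront.

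The paper's proof works differently. It first reduces to $a\in\Om(\tjh(\F))$ by the rank-monotonicity of the dominance order (which part of your plan is also using), and then, rather than trying to compute $\Om(\tjh(\F))$ as a product of block orbits, it fixes the specific tree $\sXi(\tjh(\F))$ from the proof of Theorem~\ref{general} and proves a structural claim about the output labels: along any branch of this tree the output AF factors as $\Z_1\times(\prod_{2\le i\le k}j_i^{-1})(\Z_2)$ where $\Z_1\in\AAnk(U_{n_1})$ and $\Z_2$ agrees with $\F$ on all of $U_n$ except for the columns consumed by the Case~2 $\e$-steps (tracked through the Weyl elements produced in Case~3). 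This bookkeeping of precisely which simple-root data of $\F$ is ``used up'' on the first block, combined with Proposition~7.1 of~\cite{Tsiokos2} and induction on $k$, is what replaces the false inequality in your step 2. Your proposal has the right reduction to a rank statement but misses this mechanism entirely.
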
\begin{tproof}Since $\sum_{i,j} (a_{i,j}-1)$ admits the smallest value for $a\in\Om(\tjh(\F))$ (and since this set does not depend on the choice of $\AAnk$-tree defining it), it is sufficient to fix any $(\tjh(\F)\rightarrow \Prink[H])$-tree and replace in the statement of the proposition the information `` there is an $(\tjh(\F)\rightarrow \Prink[H][a])$-path" with the information ``an output AF of the $\AAnk$-tree we just fixed belongs to $\Prink[H][a]$".   

The proof of this modification of the proposition for a certain choice of the  fixed $\AAnk$-tree follows inductively on $k$ from the claim below and from Proposition 7.1 (Part 1) in \cite{Tsiokos2}. This claim in turn easily follows from the proof of Theorem \ref{general}.
	
	\vsp
	\noindent\textbf{Claim. }\textit{Recall the meaning of $\sXi(...)$, which appears in the appendix in the proof of Theorem \ref{general}; here we identify $\mirsl{n_1}$ with the subgroup $\mirsl{n_1}\times 1\times...\times 1$ of $H$. Let $\I$ be an initial path of $\sXi(\tjh(\F))$ with output vertex an output vertex of $\sXi(\tjh(\F))$, and let $\Z$ be the label of this vertex. Let $w_1,...,w_y$ be the elements in $W_n$ such that in the $i$-th use of Case 3 in the proof of Theorem \ref{general} when we follow the path $\I$, we have $w=w_i$ (we include the cases in which $w$ is trivial). Consider the $\e$-steps of $\quasi{\I}$ which occur as in Case 2 and so that their label on the vertex they share with $\I$ is not their constant term. Let $c^\prime_1,...,c^\prime_y$ be the numbers such that the $i$-th such $\e$-step of $\quasi{\I}$ lies in the $c_i^\prime$-th column (of $n_1\times n_1$ matrices). Let $c_1,...,c_y$ be the numbers such that $(\prod_{1\leq j<i} w_j)j_1^{-1}(c_i)(\prod_{1\leq j<i} w_j)^{-1}=c_i^\prime$ where here $c_i$ (resp. $c_i^\prime$) is identified with the  torus, say $T$, in $\DDDD[n]$ (resp. $\DDDD[n_1]$) for which $\Set{T}=\{c_i\}$ (resp. $\Set{T}=\{c^\prime_i\}$). Then $\Z=\Z_1\times (\prod_{2\leq i\leq k}j_i^{-1})(\Z_2)$ where $\Z_1\in\AAnk(U_{n_1})$,\begin{equation}\label{restr}D_{\Z_2}\supseteq \prod_{i<j\not\in\{c_1,...c_y\}}U_{(i,j)}\end{equation} and $\F$ and $\Z_2$ have the same restriction on the group on the right hand side of (\ref{restr}).}
\end{tproof}
\begin{prop}\label{Sprop}
	Consider any $\eI\in\SSS$ with: $\sd(\eI)\leq 0 $; $\F_1,...,\F_{k-1},\F$ being trivial; $k>2$; and the elements of any among $\pi_1,...\pi_{k-1},$ and $\pi$, admitting absolutely convergent Eisenstein series expansion over discrete data\footnote{To be clear, we count the representations in the discrete spectrum as such representations. Also, in  \cite{Tsiokos2} we denoted the set of such representations by  $\Aut{\kkk,n,>}.$}. Then $I=0$.

\end{prop}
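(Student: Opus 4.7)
The plan is to reduce the vanishing of $I$ to Proposition~\ref{prop1}(iv) by iterating the Eisenstein series expansions of $E, \varphi_1, \ldots, \varphi_{k-1}$ over discrete data and unfolding at each stage.

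Since every $\F_i$ and $\F$ is trivial, the integral simplifies to
\begin{equation*}
I = \int_{GL_n(\kkk) Z_n(\A) \setminus GL_n(\A)} \varphi(g) \prod_{i=1}^{k-1} \varphi_i(\tj_i(g)) \, E(\tj(g)) \, dg.
\end{equation*}
First I would replace $E$ by its absolutely convergent Eisenstein expansion over discrete data on the Levi of some maximal parabolic $P\subseteq GL_N$. If $P=\Pmir{N}$ then Proposition~\ref{prop1}(iv) applies directly (because $k>2$) and gives $I=0$. Otherwise, I would carry out an unfolding in the spirit of Proposition~\ref{ftprop}: combine a Lemma~\ref{doubc}-type double-coset decomposition $P(\kkk)\setminus GL_N(\kkk)/\tj(GL_n)(\kkk)$ with the Fourier expansion of $\varphi$ over $U_n(\kkk)\setminus U_n(\A)$, writing $I$ as a finite sum of integrals in which $E$ has been replaced by its inducing section $f$ evaluated at $w\tj(g)$ along block-diagonal embeddings of $GL_n$ into the Levi of $P$.

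Next, I would iterate: apply the same procedure to each $\varphi_i$ using its Eisenstein expansion, and iterate within each unfolded term until all the remaining automorphic factors are cuspidal on blocks of the Levis at hand. The resulting terms are again of the form described in Definition~\ref{sssdefi}, but with the new value of ``$k$'' now at least as large as before---every Eisenstein expansion ``splits'' one automorphic factor into several factors supported on Levi blocks---so in particular the inequality $k>2$ still holds. The dimension equation $\sd(\eI)\leq 0$ is preserved through each unfolding: if $\pi$ is induced from a discrete datum $\sigma$ on a Levi $M=\prod GL_{m_j}$ with unipotent radical $U_P$, then by 7.1.1 of \cite{Col} (together with Main corollary~\ref{maincor}) one has $\Di{\pi}=\Di{\sigma}+\Di{U_P}$, and this $\Di{U_P}$ is exactly compensated by the new Fourier-coefficient AF of dimension $\Di{U_P}$ produced by the unfolding. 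After finitely many iterations each term is an instance of Proposition~\ref{prop1}(iv) and therefore vanishes, whence $I=0$.

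The hard part will be carrying out the unfolding step when the expansion parabolic is not $\Pmir{N}$: analogues of Lemma~\ref{doubc}, Proposition~\ref{ftprop} and the relevant parts of Lemma~\ref{Blem} must be established for the parabolic $P$ at hand. The double-coset combinatorics should still be Weyl-controlled as in Lemma~\ref{doubc} (because the embeddings of $GL_n$ involved are block-diagonal), and the AFs appearing in the unfolded integrals should still lie in $\BBnk$ by essentially the same path construction used in the proof of Lemma~\ref{Blem}; however the bookkeeping is more intricate since the Levi of $P$ can have several blocks and the inducing data $\sigma$ is discrete rather than a single cuspidal section, so the orbit inputs from the remark after Proposition~\ref{prop1} (giving $\Om(\pi)$ as an induced orbit) have to be fed in at every step. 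Once these variants are in place the hypothesis $k>2$ guarantees that Proposition~\ref{prop1}(iv) applies to every term of the unfolded sum, and the argument concludes cleanly.
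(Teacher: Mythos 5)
Your iteration scheme has a fundamental structural problem that prevents it from closing. After you expand $E$ over a parabolic $P$ and unfold, the resulting terms are integrals of the section $f$ twisted by Weyl elements and convolved with Fourier coefficients (as in equation~(\ref{sumprop})); they are \emph{not} again tuples in $\SSS$, and so the quantities ``$k$'' and ``$\sd$'' are no longer even defined for them. Your assertion that ``the resulting terms are again of the form described in Definition~\ref{sssdefi}, but with the new value of $k$ at least as large'' and that ``$\sd(\eI)\leq 0$ is preserved through each unfolding'' is therefore unsupported: one cannot meaningfully apply Proposition~\ref{prop1}(iv) to the unfolded terms, and the induction does not reach a base case. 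You also acknowledge that an unfolding theory over parabolics other than $\Pmir{N}$ (analogues of Lemma~\ref{doubc}, Proposition~\ref{ftprop} and Lemma~\ref{Blem}) would need to be constructed, but that is a substantial project and not something the paper carries out.

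The paper's own proof follows a genuinely different route. From $\sd(\eI)\leq 0$, together with the classification of automorphic forms, Corollary~9.3.5 of \cite{Tsiokos2} and the Nilpotent orbit dimension formula, one deduces that a trivial representation of some $GL_t$ appears in the discrete data of one of the representations (WLOG $\pi$). Then---this is the key maneuver you do not anticipate---$E$ is replaced by an automorphic form of which $E$ is a residue, reducing to a single Claim in which $\pi$ \emph{is} induced from the mirabolic (at the cost of weakening the bound from $\sd(\eI)\leq 0$ to $\sd(\eI)\leq n-3$ to account for the change in orbit dimension under induction). The Claim is then handled by a \emph{single} unfolding over $\Pmir{n}$, after which Proposition~\ref{elem} is invoked to convert the problem into the purely combinatorial inequality Claim$^\prime$ about sums of nilpotent orbit dimensions, proved via explicit dimension estimates (Claims 1 through 4). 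So the correct engine is not an iteration of Proposition~\ref{prop1}(iv) but the combination of a one-shot residue reduction, Proposition~\ref{elem}, and hands-on nilpotent-orbit dimension arithmetic; Proposition~\ref{prop1} plays only a minor auxiliary role (removing $GL_1$ from the inducing data).
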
\begin{tproof}It is known that:  \begin{Nilor}\label{nilorb}The dimension of an orbit $[a_1,...,a_z]\in\UUU{n}$ where $a_1\geq...\geq a_z$, is equal to two times the number of nondiagonal upper triangular entries (of $n\times n$ matrices) except for the entries in the $\sum_{1\leq i\leq j}a_i$-th row for $j=1,...,z$.\end{Nilor}

Since $\sd(\eI)\leq 0$, by the classification of automorphic forms, Corollary 9.3.5 in \cite{Tsiokos2}, and Nilpotent orbit dimension formula \ref{nilorb}, we obtain a trivial representation of a general linear group, say $GL_t$, which appears in the discrete data of one among $\pi_1$,...,$\pi_{k-1}$, and \wlo we assume this is the case for $\pi$. By Proposition \ref{prop1} we are also assuming \wlo that $GL_1$ does not appear in the inducing data of any among  $\pi_1,...,\pi_{k-1},\pi$. Hence, by replacing $E$ with an appropriate automorphic form admitting it as a residue (and by the way the dimension of a nilpotent orbit changes after applying induction), we are reduced to proving:

\vsp
\noindent\textbf{Claim.}\textit{ Let $\eI$ be as in the statement of the proposition except that: we replace ``$\sd(\eI)\leq 0$" with ``$\sd(\eI)\leq n-3$", we assume that $\pi$ is induced from $\Pmir{n}$ in exactly one way, that the representation on the $GL_{n-1}$-copy in the inducing data of this induction is nontrivial, that the elements in $\pi$ admit absolutely convergent Eisenstein series expansion over $\Pmir{n}$, and none among  $\pi_1,...,\pi_{k-1}$ are  induced  from $\Pmir{n}$. Then $I=0$.} 

\noindent{By} unfolding $I$ as usual (Eisenstein series expansion of $E$ over $\Pmir{n}$, and Fourier expansion of $\varphi$ over $U_n(\kkk)\s U_n(\A)$) and then using Proposition \ref{elem}, we see the Claim is obtained from: 

\vsp 
\noindent\textbf{Claim$^\prime$. }\textit{Let $a_1,..a_{k-1} $ be orbits in $\UUU{n}$, and $a_k$  be an orbit in $\UUU{n-1}$, and assume that none of them is the trivial or the minimal (that is, $[1,1,...]$ or $[2,1,1,...]$). Let $[a_{i,1},a_{i,2},...]:=a_i.$ Assume that $\sum_{i,j} (a_{i,j}-1)\geq n-1$. Then \begin{equation}\label{alen}\sum_{i}\frac{\Di{a_i}}{2}> \frac{n(n-1)}{2}+n-3.\end{equation} (of course, by the usual convention, $i$ varies over $1,...,k$.)} By using the Nilpotent orbit dimension formula \ref{nilorb} we easily obtain any of the claims below (at least some of which are of course familiar); among them, we only prove the last one, the other are even easier. Then Claim$^\prime$ is obtained as follows:\begin{itemize}
	\item[(i).] Nilpotent orbit dimension formula \ref{nilorb} and Claim 1 reduces it to the case, which we call Claim$^{\prime\prime}$,  in which: $a_{i,1}\geq 3$ for at most one $i$, and if such an $i$ exist, we have $a_{i,j}\geq 2$ for all $j$.
	\item[(ii).] Then by Claim 2 we are reduced to the modification of Claim$^{\prime\prime}$, which we call Claim$^{\prime\prime\prime}$, in which, in case the $i$ as in (i) exists: we have $i=k$, we replace ``$\sum_{i,j} (a_{i,j}-1)\geq n-1$" with  ``$\sum_{i,j} (a_{i,j}-1)\geq n-2$", and we substract from the right hand side of (\ref{alen}) the number $\min\{a_{k,1},a_{k,2},...\}-1$.
	\item[(iii).] Then by using Claim 3 we reduce Claim$^{\prime\prime\prime}$ to the special case which we call Claim 4. 
\end{itemize} 

\vsp 
\noindent\textbf{Claim 1 }(follows from \cite{Col}, Theorem 6.2.5).\textit{ If for two $a,b\in\UUU{n}$ we have $a>b$, then $\Di{a}>\Di{b}$. In particular, if $a$ is obtained from $b$ by changing two terms $r\geq r^\prime$ of the later one with the terms $r+1,r^\prime-1$, then $\Di{a}\geq\Di{b}$.}

\vsp
\noindent\textbf{Claim 2. }\textit{Let $[a_1,...,a_y]\in\UUU{n}$ be such $a_1\geq...\geq a_y\geq 2$ and $a_1>2$. Then $$\Di{[a_1,...,a_y]}+\Di{[2^l,1^{n-1-l}]}\geq \Di{[2^l,1^{n-l}]}+\Di{[a_1,...,a_{y-1},a_y-1]}+2(a_y-1).$$  }
		
\vsp 
\noindent\textbf{Claim 3. }\textit{Consider positive integers $l_1>l_2$ which are smaller from $\frac{n}{2}$. Then $$\Di{[2^{l_2},1^{n-l_2}]}+\Di{[2^{l_1},1^{n-l_1}]}\geq\Di{[2^{l_2-1},1^{n-l_2+1}]}+\Di{[2^{l_1+1},1^{n-l_1-1}]}.$$}		

\vsp 
\noindent\textbf{Claim 4. }Let $a=[a_1,...,a_y]\in\UUU{n-1}$ and $n\leq l\leq 2$ be such that: \begin{equation}\label{zkoual}2+l+\sum_i( a_i-1)\geq n-2;\end{equation}$a_1\geq...\geq a_y$;  if $a_1\geq 3$ then $a_y\geq 2$; and if $a_1=2$ then the left hand side of (\ref{zkoual}) is bigger or equal to $n-1$. If the left hand side of (\ref{zkoual}) is bigger or equal to $n-1$ we define $x:=0$; otherwise we define $x:=a_y-1$. We have: \textit{\begin{equation}\label{cl6}\frac{\Di{[2^2,1^{n-4}]}}{2}+\frac{\Di{[2^l,1^{n-l}]}}{2}+\frac{\Di{a}}{2}> \frac{n(n-1)}{2}-x+n-3.\end{equation}}

\vsp\noindent\textit{Proof of Claim 4.} We are easily reduced to assuming that the left hand side of (\ref{zkoual}) is equal to $n-1$ or $n-2$ (and hence $l$ is respectively equal to $y-2$ or $y-3$). Then, due to Nilpotent orbit dimension formula \ref{nilorb}, the left hand side of (\ref{cl6}) is equal to $\frac{n(n-1)}{2}-x^\prime+s_2+...s_{l+2}$ where: $s_2=a_1-2,$ $s_3:=a_1+a_2$, for $2<i\leq l+2$ we have   $s_i:=a_1+...+a_{i-1}-2i+6$, and if $x=0$ (resp. $x=a_y-1$) we define $x^\prime:=0$ (resp. $x^\prime:=a_y$). Indeed: consider the three triangles below, each of them containing the strictly upper triangular entries of $n\times n$ matrices (which are $\frac{n(n-1)}{2}$ in total), and the trapeziums on the third triangle with horizontal bases have heights (that is number of rows nontrivially intersecting them ) $a_1-1,...,a_y-1$ starting from the uppermost one and moving downwards; for $1\leq i\leq l+2$ let $z_i$ be the number of entries 
in the trapezium within which $z_i$ lies in one of the first two triangles; we then notice that each $z_i-s_{i}$ is also the number of entries of the area enclosed by the polygon containing this number (this time in the third triangle); then by using Nilpotent orbit dimension formula \ref{nilorb} we obtain $\frac{\Di{[2^2,1^{n-4}]}}{2}=z_1+z_2$, $\frac{\Di{[2^l,1^{n-l}]}}{2}=\sum_{3\leq i\leq l+2}z_i$, and $\frac{\Di{a}}{2}$ is equal to the number of entries of the trapeziums (with horizontal bases) on the third triangle. Hence we finish the proof by checking\footnote{After checking a few cases we are reduced to assuming: $a_1\geq 3$ and $y\geq 6$. Then $s_2+...+s_{l+2}\geq a_1-2+(a_1+a_2)(y-3)\geq +1+(a_1+a_2)\frac{y}{2}\geq 1+(a_1+...+a_y)\geq n$. } that $s_2+...+s_{l+2}>n-2$.

\newcommand{\tr}{30} 
\begin{tikzpicture}[scale=0.15]
\draw(0,\tr)--(\tr,\tr)--(\tr,0)--(0,\tr);
\draw(1,\tr-1)--(\tr,\tr-1);
\draw(2,\tr-2)--(\tr,\tr-2);
 \draw(3,\tr-3)--(\tr,\tr-3);
 \draw(\tr/2,\tr-0.5)node{$z_1$};
 \draw(\tr/2+1,\tr-2.5)node{$z_2$};
\end{tikzpicture}\hspace{1mm}\begin{tikzpicture}[scale=0.15]
\draw(0,\tr)--(\tr,\tr)--(\tr,0)--(0,\tr);
\draw(1,\tr-1)--(\tr,\tr-1);
\draw(2,\tr-2)--(\tr,\tr-2);
\draw(3,\tr-3)--(\tr,\tr-3);
\draw(4,\tr-4)--(\tr,\tr-4);
\draw(5,\tr-5)--(\tr,\tr-5);
\draw(\tr/2,\tr-0.5)node{$z_3$};
\draw(\tr/2+1,\tr-2.5)node{$z_4$};
\draw(\tr/2+2,\tr-4.5)node{$z_5$};
\draw(\tr-11,21)node{$\ddots$};
\end{tikzpicture}\hspace{1mm}\begin{tikzpicture}[scale=0.15]
\draw(0,\tr)--(\tr,\tr)--(\tr,0)--(0,\tr);
\draw(3,\tr-3)--(\tr-1,\tr-3)--(\tr-1,\tr);
\draw(4,\tr-4)--(\tr,\tr-4); 
\draw(6,\tr-6)--(\tr-1,\tr-6)--(\tr-1,\tr-4); 
\draw(7,\tr-7)--(\tr,\tr-7);
\draw(9,\tr-9)--(\tr-1,\tr-9)--(\tr-1,\tr-7);
\draw(10,\tr-10)--(\tr,\tr-10);
\draw(12,\tr-12)--(\tr-1,\tr-12)--(\tr-1,\tr-10);
\draw(13,\tr-13)--(\tr,\tr-13);
\draw(15,\tr-15)--(\tr-1,\tr-15)--(\tr-1,\tr-13);
\draw(16,\tr-16)--(\tr,\tr-16); 
\draw(\tr/2,\tr-3.5)node{$z_1$};
\draw(\tr/2+2,\tr-6.5)node{$z_2-s_2$};
\draw(\tr/2+4,\tr-9.5)node{$z_3-s_3$};
\draw(\tr/2+6,\tr-12.5)node{$z_4-s_4$};
\draw(\tr/2+8,\tr-15.5)node{$z_5-s_5$};
\draw(\tr/2+9,12)node{$\ddots$};
\end{tikzpicture}

\hspace{125mm}$\square$Claim 4\end{tproof}
\begin{remark}[Avoiding the use of Main corollary \ref{maincor}.]\label{avmain} In the beginning of the proof above, we made use of Proposition  \ref{prop1}, in the proof of which in turn we used Main corollary \ref{maincor}. We can avoid using Proposition \ref{prop1} by instead proving the modification of Claim$^\prime$ in which: the orbits $a_1,...,a_k$ are allowed to be equal to the minimal one, $a_k$ can even be the trivial one, and we remove $n-3$ in the right hand side of (\ref{alen}). This modification of  Claim$^\prime$ is obtained for example by repeating (i), (ii), and (iii), except that in (iii) we are reduced to a statement easier than Claim 4.\end{remark}
\begin{remark}[Relations of the present paper with the literature]\label{rlit}In the present paper we used slight refinements of results on Fourier coefficients appearing in \cite{Tsiokos2} (and we explain there how this paper in turn relates to the literature). 
	
	As for the unfolding of integrals until we are left with Fourier coefficients (that is, (\ref{sumprop})), except for the use of Lemma \ref{doubc}, we proceded as in the most familiar $GL_n\times GL_n$ Rankin-Selberg integrals\footnote{To be clear, the present paper contains no information about which $L$-functions appear.}. Proposition \ref{prop1} in the special case $k=2$ $N_1=N=n$ (and hence $\F_{1}=\F=\F_{\emptyset,n}$) is found in \cite{Ginzeul} (in Section 5). Proposition \ref{prop1} in the special case that $k=1$ and $D_{\F}$ is generated by root groups appears in \cite{Tsiokos2} (in 8.4). 
	
	Next we discuss the results about vanishing of integrals. Conjecture 1 in \cite{GinzAn} has as special case: a special case of Proposition \ref{prop1} in which $\F_1,...,\F_{k-1},\F$ are among certain AFs, say $\Z$, satisfying $\Om(\Z)=\{\Om(J_\Z)\}$. Proposition \ref{Sprop} is the restriction of Conjecture 1 in \cite{Ginzl} to representations with elements admitting absolutely convergent Eisenstein series expansion over discrete data. Since a proof of many cases of Conjecture 2 in \cite{Ginzl} appear in \cite{Ginzl}, and since I see no mention of my name, I point out that at least 4 months before the submission of \cite{Ginzl} (on the arXiv), the author of \cite{Ginzl} received emails from me which included most of the work\footnote{in the situation that one has available  the part of the literature preceding \cite{Ginzl} (and hence preceding \cite{Tsiokos2}).} for obtaining any of the three following proofs of Conjecture 2:
	
	\vsp
	\noindent\textit{Proof 1.} Use: Main corollary \ref{maincor} (and Exchange corollary \ref{exchange}), (\ref{treesf}), (\ref{treesf2}), and (\ref{treesf3}).\hfill$\square$
	
	\vsp 
	\noindent\textit{Proof 2.} Use: Main corollary \ref{maincor}, the claim inside the proof of Proposition \ref{elem} only for $I$ being an $\e$-path, Claim 1 and Claim 3 in the proof of Proposition \ref{Sprop}, and Nilpotent orbit dimension formula \ref{nilorb}.\hfill$\square$
	
	\vsp 
	\noindent\textit{Proof 3.} Change Proof 2 by: Removing the use of Main corollary \ref{maincor}, and using (this time more of the essence of) Proposition \ref{elem}.\hfill$\square$

 	\noindent{He} received these emails while I was a postdoc in Tel Aviv University and he was the person over there to whom I was presenting progress in my research. In particular, in 14 May 2016 I emailed him a document which contained: Theorem 3.1 of \cite{Tsiokos2} (both statement and proof);  most of the work of section 6 in \cite{Tsiokos2}. I mention Theorem 3.1 of \cite{Tsiokos2} because it overlaps with the proof in \cite{Ginzl} and because it leads to Proof 3 above, which is much shorter. I mention Section 6 in \cite{Tsiokos2} because together with Theorem 3.1 it gives even shorter proofs (Proofs 1 and 2 above).  Note that Proof 3 above remains much shorter from the proof in \cite{Ginzl} even after uncovering as needed the proof of Theorem 3.1 in \cite{Tsiokos2}. In the proof in \cite{Ginzl} the representations that are addressed are not addressed simultaneously.  	\end{remark}

\section{Appendix: general results on $\AAnk$-trees}Below we recall and consider simple extensions of results and definitions from \cite{Tsiokos2}. The differences in the proofs are minor. We present the proof of the first Theorem below in a self contained way, and we mostly skip the other proofs (which are even more identical to the ones in \cite{Tsiokos2}). 

Recall from \cite{Tsiokos2} that for a $\kkk$-group $V$, the set of AFs with domain $V$ is denoted by $\AAnk(V)$ (and hence the subset of $\kkk$-AFs with $\AAA(V)$). Also, for an algebraic group $H$, the set of AFs with domain contained in $H$ is denoted by $\AAnk[H]$.   \begin{theorem}[It extends 3.1 in \cite{Tsiokos2}]\label{general}Let $\mirsl{n}:=j(SL_{n-1})\prod_{1<j\leq n} U_{(1,j)}$ where $j$ is the lower right corner embedding of $SL_{n-1}$ in $GL_n$ (``as" stands for ``associate"). Let $\F$ be an AF over $\kkk$ such that there is a (linear) $\kkk$-group containing both $D_\F$ and $\mirsl{n}$ as $\kkk$-subgroups. We assume that $D_\F$ normalizes $\mirsl{n}$ and that there is a $\kkk$-homomorphism $f:\mirsl{n}D_\F\rightarrow \mirsl{n}$ such that its restriction to $\mirsl{n}$ is the identity morphism and $f(D_\F)\subseteq U_n$. Then there is an $$\left(\F\rightarrow\AAA(U_n)D_\F,\mirsl{n}D_\F,\kkk\right)\text{-tree}.$$
\end{theorem}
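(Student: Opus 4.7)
The plan is to generalize the construction underlying Theorem 3.1 in \cite{Tsiokos2}, where the same conclusion is established in the special case $D_\F \subseteq U_n$. The strategy is a Whittaker-style row-by-row expansion of $\F$ with respect to the first row root groups of $\mirsl{n}$, followed by an inductive descent into the copy of $SL_{n-1}$ embedded in the lower right corner; these two pieces together generate all of $U_n$, so if the construction succeeds one indeed lands in $\AAA(U_n) D_\F$.

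First I would induct on $n$, with the base case $n=1$ essentially trivial since $\mirsl{1}$ and $U_1$ are both trivial. For the inductive step, I would process the root groups $U_{(1,n)}, U_{(1,n-1)}, \ldots, U_{(1,2)}$ in turn. At each root group not already contained in the current domain, apply an $\e$-step to adjoin it. This produces two kinds of contributions: the constant (trivial-character) term, absorbed by a $\co$-step; and the non-constant characters, which form a single open orbit under the conjugation action of the lower right $GL_{n-1}$ (and its mirabolic). A representative of this orbit is selected by an $\eu$-step, conjugating by an element $w \in W_n$ of the Weyl group of the lower right $SL_{n-1}$, which therefore lies in $\mirsl{n}$. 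This matches the ``Case 2'' and ``Case 3'' structure referenced in the Claim inside the proof of Proposition \ref{elem} above, where the column indices $c_i$ track precisely which entries are affected by the successive Weyl conjugations $w_i$ chosen along a branch of $\sXi(\F)$.

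The role of the homomorphism $f$ in the hypothesis is to encode compatibility with the presence of $D_\F$: conjugations take place in $\mirsl{n} D_\F$, but must be compatible with the eventual landing of outputs in $\AAA(U_n) D_\F$. The condition $f(D_\F) \subseteq U_n$ combined with $f|_{\mirsl{n}} = \mathrm{id}$ says that $D_\F$ ``looks like'' a piece of $U_n$ from the point of view of $\mirsl{n}$, which guarantees that the Weyl conjugations do not move elements of $D_\F$ outside the allowed group, and that the resulting AF on $\mirsl{n} D_\F$ can still be expressed through its restriction to $U_n$ extended compatibly. After all first-row root groups have been handled, what remains lives on $j(SL_{n-1})$ paired with a modified $D_{\F'}$, and the inductive hypothesis for $n-1$ closes the loop.

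The main obstacle will be verifying, at each $\eu$-step, that the conjugating Weyl element really lies in $\mirsl{n} D_\F$ and that afterwards the updated data $(\F', D_{\F'}, f')$ satisfies the hypotheses of the theorem applied to the $SL_{n-1}$ in the lower right. Concretely one needs the updated homomorphism $f'$, obtained by composing $f$ with conjugation by $w$ and restricting to the lower right $GL_{n-1}$, to still map $D_{\F'}$ into the upper triangular unipotent of $GL_{n-1}$; this is precisely where the flexibility of allowing an arbitrary $f$ (rather than insisting on $D_\F \subseteq U_n$ outright) becomes essential, and constitutes the genuinely new content of the extension beyond Theorem 3.1 of \cite{Tsiokos2}.
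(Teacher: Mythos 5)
Your high-level architecture---expand along the first row of $\mirsl{n}$, descend into the lower-right $SL_{n-1}$, and use $f$ to organize both---matches the paper's proof, and you correctly identify where the flexibility of $f$ enters. But two concrete parts of the mechanism are missing or wrong. First, the induction: you induct only on $n$. The paper's proof is a double induction, on $n$ and, for fixed $n$, downwards on $\Di{D_\F\cap U_n}$. The second parameter is essential because when an $\e$-step is taken (to adjoin a first-row root group not already in the domain), the output AFs have strictly larger $\Di{\cdot\cap U_n}$ but the same $n$ and need not yet be of the form $j(\F^\prime)\circ\YY$ required to descend to $n-1$. Without the downward induction on the domain the recursion cannot close, since a branch may require several $\e$-steps at the same $n$ before the first row is exhausted and the descent is possible.

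Second, the step types. You describe the non-constant terms as ``selected by an $\eu$-step, conjugating by an element $w\in W_n$,'' and the constant term as ``absorbed by a $\co$-step.'' This inverts the roles. In the paper, conjugation is what a $\co$-step does; a $\co$-step is applied either with a unipotent element (Case 3, item 2, to kill the restriction of the current AF to $U_{(1,l_1-i)}$) or, once, at the end of Case 3, with a minimal-length Weyl element $w$ that brings the result into the shape needed for the $(n-1)$-descent. The $\eu$-step is an exchange step, applied precisely when the identity embedding $j_{D_{\F^{i-1}},l_1-i,l_1}$ of quotients (built from the filtration by $L_{(k,l)}$, which is where $f$ actually enters) is an isomorphism; it is not a conjugation and does not involve choosing a Weyl representative. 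The constant term of an $\e$-step is an output vertex of that $\e$-step itself, not absorbed by a separate step. The dichotomy among $\e$-, $\co$-, and $\eu$-steps is governed concretely by whether $j_{D_{\F^{i-1}},l_1-i,l_1}$ is trivial or an isomorphism and whether $U_{(1,l_1-i)}$ already lies in the current domain. Reproducing that casework (and verifying that each step really is an $\AAnk$-step over $\mirsl{n}D_\F$) is the heart of the construction, and your proposal does not carry it out; as written, an ``$\eu$-step that conjugates by a Weyl element'' is not one of the available moves and would not yield a valid $\AAnk$-tree.
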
\begin{tproof}We freely use notations from \cite{Tsiokos2}. Let $\XXX_n$ be the set of AFs $\F$ which are required to satisfy everything in the statement of the theorem except for the last sentence. For each $\F\in\XXX_n$ we inductively define an $\left(\F\rightarrow\AAA(U_n)D_\F,\mirsl{n}D_\F,\kkk\right)$-tree which we call $\sXi(\F).$ The induction is on $n$ and downwards on $\Di{D_\F\cap U_n}.$ More precisely: if $D_\F\supseteq U_n$ we define $\sXi(\F)$ to be the trivial $\F$-tree (that is, $\sXi(\F)$ has only one vertex); and otherwise we define $\sXi(\F)$, assuming that  $\sXi(\F^\prime)$ is defined for all $n^\prime $ and $\F^\prime\in\XXX_{n^\prime} $ satisfying one among 1) and 2) below $$1)\hspace{2mm}n^\prime<n\quad\quad\quad 2)\hspace{2mm}n^\prime=n\text{ and }\Di{D_{\F^\prime}\cap U_n}>\Di{D_\F\cap U_n}. $$  

Let $j$ be the lower right corner embedding of $GL_{n-1}$ in $GL_n.$ Assume that an AF $\F^\ih\in \XXX_n$ satisfies \begin{equation}\label{zsmir}\F^\ih= j(\F^{\prime})\circ\YY\end{equation}
where\begin{itemize}
	\item $\YY$ is an AF with domain $D_{\YY}=\prod_{1<j\leq n}U_{(1,j)},$ which is trivial on all root groups contained in $D_\YY$ except possibly $U_{(1,2)}$ (therefore $j(\mirsl{n-1}(\kkk))\in\Stab{\mirsl{n}}{\YY}$);
	\item $\F^\prime\in\XXX_{n-1}$,  and $j(\F^\prime)$ is defined  by extending $j$ so that its domain contains $D_{\F^\prime}$ and the third sentence in the statement of the theorem holds after replacing $D_\F$ with $j(D_{\F^\prime})$.\end{itemize} Then we define $\sXi(\F^\ih):=j(\sXi(\F^{\prime}))\circ\YY.$

We will define an $(\F,\mirsl{n}D_\F,\kkk)$-tree $\xi$, for each output AF ${\F^\ih}^\prime$ of which: we have $f(D_{{\F^\ih}^\prime})\subseteq U_n$, and either satisfies $\Di{D_{{\F^\ih}^\prime}}>\Di{D_\F}$ or is chosen as in (\ref{zsmir}) in the place of $\F^\ih$.  After that, the proof ends by defining $$\sXi(\F):=\sXi({\F^\ih}^\prime)\vee_{{\F^\ih}^\prime}\xi$$ where ${\F^\ih}^\prime $ varies over all output AFs of $\xi$. 

In the cases that  $\xi$ will turn out to contain an  $\e$-step, say $\xi_\e$, this is its last $\AAnk$-step,  because the dimension of the intersection of $U_n$  with the domain of the terms of $\xi_\e$ (that is, the output AFs of $\xi_\e$) is bigger from $\Di{D_\F\cap U_n }$.

Consider the smallest number $m\geq 2$ for which the set $A_m:=\prod_{m\leq i\leq n}U_{(1,i)}$ is contained in $D_\F$. To be clear, by $\prod_{n+1\leq i\leq n}U_{(1,i)}$ we mean the trivial subgroup of $GL_n.$

\vspace{1mm}
\noindent\textit{Case $1$: $\F(A_m)=\{0\}$ and $2<m$.} In this case $\xi$ is chosen to be the $(\F,\e)$-step over $U_{(1,m-1)}$.\hspace*{50mm}\hfill$\square$Case 1 

\vspace{1mm}
\noindent\textit{Case $2$: $m=2$ and $\F(A_2)=\{0\}$.}  In this case (\ref{zsmir}) is true for $[\F^\ih\leftarrowtriangle\F],$ and hence we choose $\xi$ to be the trivial $\F$-tree.\hfill$\square$Case 2

\vspace{1mm}
\noindent\textit{Case $3$: $\F(A_m)\neq\{0\}$.} We start with some definitions and observations, and return to defining $\xi$ in the next paragraph. Consider integers $1<l\leq n$ and $1\leq k<l$, and an algebraic subgroup $D$ of $U_nD_\F$. We define $L_{(k,l)}$ to be the biggest algebraic subgroup of $U_{n}D_\F$ such that $f(L_{(k,l)})$ is generated by all the root groups in $U_n$ except the $U_{(i,l)}$ with $k<i< l$. For example $L_{(l-1,l)}=U_{n}D_\F$. For $k>1$, the group $L_{(k-1,l)}$ is a normal subgroup of $L_{(k,l)}$, and hence $D\cap L_{(k-1,l)}$ is a normal subgroup of $D\cap L_{{k,l}}$. Since unipotent algebraic groups are connected,  the identity embedding 
\begin{equation}\label{embedding1}j_{D,k,l}:D\cap L_{(k-1,l)}\s D\cap L_{(k,l)}\rightarrow L_{(k-1,l)}\s L_{(k,l)},\end{equation} is either trivial or an isomorphism. 

Back to defining $\xi,$ let $l_1$ be the biggest number such that $\F(U_{(1,l_1)})\neq\{0\}$. The $\AAnk$-tree $\xi$ will either finish with an  $\e$-step  in at  most $l_1-2$ $\AAnk$-steps or it will consist of $l_1-1$ $\AAnk$-steps each one being a $\eu$-step or a $\co$-step.

Let $\F^0:=\F. $ Assume that for a number $i $ satisfying $1\leq i\leq l_1-2$, the first $i-1$ $\AAnk$-steps of $\xi$ have been defined, that none of them was an $\e$-step, and that the output AF of the $i-1$-th one has been given the name $\F^{i-1}$. To avoid confusion, if $i=1$, this means that no $\AAnk$-steps have been defined. The $i$-th $\AAnk$-step in $\xi$---and in case it is not an $\e$-step, its output AF $\F^i$---are defined as follows:\begin{enumerate}
	\item If $j_{D_{\F^{i-1}},l_1-i,l_1}$ is trivial and $U_{(1,l_1-i)}\not\subseteq D_{\F^{i-1}} $, the $i$-th $\AAnk$-step in $\xi$ is the $(\F^{i-1},\e)$-step  over $U_{(1,l_1-i)}$. Hence this is the last $\AAnk$-step in $\xi.$
	\item If $j_{D_{\F^{i-1}},l_1-i,l_1}$ is trivial and $U_{(1,l_1-i)}\subseteq D_{\F^{i-1}} $, there is an element $u\in U_{(l_1-i,l_1)}(\kkk)$, such that $u\F^{i-1}(U_{(1,l_1-i)})=\{0\}$. We choose the $i$-th $\AAnk$-step in $\xi$ to be the $(\F^{i-1}\rightarrow u\F^{i-1},\co)$-step and we define $\F^i:=u\F^{i-1}$.
	\item Finally assume that $j_{D_{\F^{i-1}},l_1-i,l_1}$ is an isomorphism. Let $\F^i$ be the AF defined by \begin{multline}D_{\F^i}=U_{(1,l_1-i)}(D_{\F^{i-1}}\cap L_{(l_1-i-1,l_1)}),\qquad\F^i(U_{(1,l_1-i)})=\{0\},\quad\text{and}\\\F^i(u)=\F^{i-1}(u)\quad\forall u\in D_{\F^{i-1}}\cap L_{(l_1-i-1,l_1)}.\end{multline} The $i$-th $\AAnk$-step in $\xi$ is the $(\F^{i-1}\rightarrow\F^{i},\eu)$-step.  
\end{enumerate}   

Assuming  that no $\e$-step was encountered, we have   
\begin{equation}\label{downreplaced}\prod_{1<j\leq n}U_{(1,j)}\subseteq D_{\F^{l_1-2}}\subseteq L_{(1,l_1)}, \end{equation}
and the last $\AAnk$-step of $\xi$ is the $\co$-step obtained from the minimal length element  $w\in W_{n}$ such that (\ref{zsmir}) is correct for $[\F^\ih\tlarrow w\F^{l_1-2}].$\hfill$\square$Case 3
\end{tproof}Unless otherwise specified, \textbf{in the rest of the appendix}, $$H:=\prod_{1\leq i\leq k}GL_{n_i}$$ (for some positive integers $k,n_1,...,n_k$).  
\begin{defi}[$\ossa$-groups]\label{ossagroup}Let $S$ be a finite set of root groups in $H$ such that for any roots $\alpha,\beta$ for which $U_\alpha$ and $U_\beta$ belong in $S$ we have: neither $\alpha+\beta$ nor $\alpha-\beta$ is a root, and $\alpha\neq -\beta$. Any at most one dimensional algebraic subgroup of  $\prod_{V\in S} V$ is called a $\ossa$-group.

We say that a $\ossa$-group, say $L$, is nontrivial on an entry, if and only if this entry is nondiagonal and a matrix in $L$ is notrivial on this entry.\end{defi} 
\begin{defi}[$\X{\F}$, $J_\F$]\label{mtrox}Let $\F\in\AAnk[H].$ We define $$\X{\F}:=\{(J_1,....,J_k)\in\Lie{H}: \sum_{1\leq i\leq k}\mathrm{tr}(J_iu_i)=\F(u_1,...,u_k)\qquad\forall(u_1,...,u_k)\in D_\F\}$$(where $\mathrm{tr}$ denotes the trace). Further assume that $D_\F$ is generated by root groups; then we denote by $J_\F$ the unique element in $\X{\F}\cap D_\F^t$ (where $D_\F^t:=\{(J_1,...,J_k): (J^t_1,...,J_k^t)\in D_\F\}$, and $t$ denotes transpose).\end{defi}
	
\begin{defi}[\text{$\Prink[H]$, $\Prink[H][a]$}]\label{pridefi}	 We define $$\Prink[H]:=\{\F\in\AAnk[H]:D_\F\text{ is a unipotent radical of an }H\text{-parabolic subgroup}\},$$ and for an $a\in\UUU{H}$, we also define $\Prink[H][a]=\{\F\in\Prink[H]:\Om(J_\F)=a\}$. 

We frequently write $\Prink[n]$ (resp. $\Prink[n][a]$,...) in place of $\Prink[GL_n]$ (resp. $\Prink[GL_n][a]$,...). \end{defi}

	\begin{defi}[$\mathcal{B}_H(k)$, $\mathcal{B}_H(k)$-paths]\label{defBBB} For two AFs $\F^\prime,\F\in\AAnk[H]$, consider an  $(\F^\prime\rightarrow\F,H)$-path called $\Xi$. For every  $\e$-step $\xi$ of $\quasi{\Xi}$, say the $i$-th one, consider an algebraic subgroup of $H$ which acts by conjugation freely and transitively on a subset, say $S_i$, of the variety consisting of the terms of this $\e$-step, so that the vertex of $\Xi$ which is also an output vertex of  $\xi$, is labeled with an AF in $S_i$ (here we do not identify any different vertices). 
		
		For every nonegative integer $k$ for which we can choose these actions so that $$k\geq\Di{D_\F}-\Di{D_{\F^\prime}}-\sum_i\Di{S_i} $$ we say that $\Xi$ is an $(\F^\prime\rightarrow \F,\BBnk[H](k))$-path or (just) a $\BBnk[H](k)$-path. We denote by $\BBnk[H](k)$ the set of AFs $\F$ admitting an $(\F_{\emptyset,H}\rightarrow \F,\BBnk[H](k))$-path. We frequently write $\BBnk[H]$ instead of $\BBnk[H](0)$, and (similarly to other notations) $\BBnk[n]$ instead of $\BBnk[GL_n].$ 	\end{defi}

\begin{pdefi}[$\Om(\F)$, $\mult{a}{\F}$, $\Omm{\F}$]\label{AOF} Let $\F\in\AAnk[H]$ and $\Xi$ be an $(\F\rightarrow\Prink[H],H)$-tree (known to exist by iterative uses of Theorem \ref{general}). We define $\Om(\F)$ to be the set consistsing of the minimal elements of $$\{a\in\UUU{H}:\text{An output AF of }\Xi\text{ belongs to }\Prink[H][a]\}.$$ 
	Let $a\in\Om(\F)$. If the  output vertices of $\Xi$ having label in $\Prink[H][a] $ are finitely many, we denote by $\mult{a}{\F}$ the number of such vertices. If these vertices are infinitely many we write $\mult{a}{\F}=\infty.$
	It turns out that $\Om(\F) $ is also the set of minimal elements in $\{a\in\UUU[H]:\X{\F}\cap a\neq\emptyset\}$; hence we obtain that $\Om(\F)$ does not depend on the choice of $\Xi$, and with a similar argument $\mult{a}{\F}$ is also independent from $\Xi$ (see 6.15 in \cite{Tsiokos2}).
	Finally we define \begin{equation*}\Omm{\F}:=\{a\in\Om(\F):\mult{a}{\F}<\infty\}.\qedhere \end{equation*}
	\end{pdefi}
\begin{maincor}[It extends 6.17 in \cite{Tsiokos2}; here we also define $\Omsd{\F}$]\label{maincor}Consider  $\F\in\AAnk[H]$ and a nonegative integer $k$ such that there is a $(\F_{\emptyset,H}\rightarrow\F,\in\BBnk[H](k))$-path. Consider an orbit $a\in\UUU{H}$, such that there is an  $(\F\rightarrow \Prink[H],H )$-tree with an output AF belonging in $\Prink[H][a]$ (equivalently $\X{\F}\cap a\neq\emptyset$).  Then:\begin{enumerate}[A.]
		\item $\frac{\Di{a}}{2}\geq\Di{D_\F}-k; $
		\item If \begin{equation}\label{pthap}\frac{\Di{a}}{2}=\Di{D_\F}-k, \end{equation} then: $\F\in\BBnk[H](k^\prime)$ if and only if $k^\prime\geq k$;  we define $\Omsd{\F}=\{b\in\Om(\F):\Di{b}=\Di{D_\F}-k\}$. If for no data $a,k$ as in the first two sentences of the present corollary  formula (\ref{pthap}) holds, we define $\Omsd{\F}:=\emptyset$.
		\item $\Omsd{\F}\subseteq\{b\in\Om(\F):\mult{b}{\F}=1\}$.
		\item Assume that $\F\in\BBnk[H]$. Then $$\left\{b\in\Om(\F):\Di{D_\F}=\frac{\Di{b}}{2}\right\}=\Omsd{\F}=\Omm{\F}=\{b\in\Om(\F):\mult{b}{\F}=1\}$$(of course the first equality follows from the definition of $\Omsd{\F}$).
	\end{enumerate}
\end{maincor}
\begin{tproof}The differences with the proof of Main corollary 6.17 in \cite{Tsiokos2} are small, the two that matter the most are:\begin{itemize}
		\item As already mentioned in the definition above, the existence of an $(\F\rightarrow\Prink[H],H)$-tree is obtained by iterative uses of Theorem \ref{general} (instead of a single use of Theorem 3.1 in \cite{Tsiokos2}).
		\item To obtain C, we again prove that  $\X{\F}\cap b$ is a connected variety for $b\in\Omsd{\F};$ in fact, $\X{\F}\cap b$ is  irreducible (again), and we obtain this irreducibility inductively by again using that $\X{F^{\prime\prime}}\cap b$ is an open nonempty subset of $\X{\F^{\prime}}\cap b$, where $\F^\prime$ and $\F^{\prime\prime}$ are any two successive labels in a path $\Xi$ as in Definition \ref{defBBB} (by making the smallest choice of $k$); but in contrast to the case $k=0$, to obtain this openness we use the information $b\in\Omsd{\F}$ (instead of only using that $b$ is a subvariety of $\Lie{H}$ nontrivially intersecting $\X{\F}$).   
	\end{itemize}
	\end{tproof}
\begin{defi}[$\rifff{H}{\XXX}$]\label{lrif}Let (more generally than in the rest of the appendix) $H$ be an algebraic subgroup of a product of general linear groups. Let $\F\in\AAnk[H], $ and $\XXX\subseteq \AAnk[H]$. Consider  an $(\F,H)$-tree $\Xi.$ Let $\VVV$ be a set consisting of  output vertices of $\Xi$, and for each $u\in \VVV$ let $\F_{u} $ be the label of $u$. For every  AF $\Z$ appearing as the label of an output vertex of $\Xi$ which is not a vertex in $\VVV$,  we assume  there are infinitely many output vertices of $\Xi$ with label equal to $\Z.$ We then write \begin{equation}\label{xtr}\F\rifff{H}{\XXX}\{\F_{u}:u\in \VVV\}\cap\XXX.\end{equation} To be clear, the only notation introduced in (\ref{xtr}) is ``$\rifff{H}{\XXX}$" (the right hand side is a set intersection as usual). In case $H$ is a direct product of general linear groups (resp. $\XXX=\AAnk[H]$), we remove $H$ (resp. $\XXX$) from this notation.
\end{defi}
\begin{Ecor}[It extends 8.3.5 in \cite{Tsiokos2}]\label{exchange}Let $k,k_i$ be nonegative integers,  $\F\in\BBnk[n](k)$, $\F_i\in\BBnk[n](k_i)$, and $\F_i^\prime\in\AAnk[n]$, where $i$ varies over the elements of a set, say $X$. Let $d:=\Di{D_\F}-k$. Assume that \begin{enumerate}\item there is an $a\in\Om(\F)$ with $\frac{\Di{a}}{2}=d$ 
		\item $\Di{D_{\F_i}}-k_i=d$,
		\item $\F\rif\{\F_i^\prime:i\in X\}$,
		\item $\F_i\rif\F_i^\prime$.
			\end{enumerate}Then $$\Omsd{\F}=\bigcup_i\{a\in\Om(\F_i):\Di{a}=d\}, $$ also for each $i$ the set $\{a\in\Om(\F_i):\Di{a}=d\}$ is equal to $\Omsd{\F_i}$ or to $\emptyset$. 
\end{Ecor}
\begin{proof}It follows directly from Main corollary \ref{maincor}.
\end{proof}


\begin{thebibliography}{88}
	\bibitem[BG]{BG}D. Bump, D. Ginzburg. Symmetric square L-functions on $GL(r)$. Annals of Mathematics. Second series. Vol 136, No. 1 (Jul., 1992), pp. 137-205.
	\bibitem[CM]{Col}D H Collingwood, William M. McGovern. Nilpotent orbits in semisimple lie algebras.
	\bibitem[G1]{Ginzeul}D. Ginzburg. Eulerian Integrals for $GL_n$ In ``Multiple Dirichlet series, Automorphic Forms and Analytic Number Theory", 203-223
	\bibitem[G2]{GinzAn}D. Ginzburg. Classification of some global integrals related to groups of type An. J. Number Theory 165 (2016), 169202
	\bibitem[G3]{Ginzl}D. Ginzburg. On the length of global integrals for $GL_n$ {\tt arXiv:1609.05451v1}.
	\bibitem[G4]{Ginzsmall}D. Ginzburg. Towards a classification of global integral constructions and functorial liftings using the small representations method.  Adv. Math. 254 (2014), 157-186. 
	\bibitem[T]{Tsiokos2}E. Tsiokos. On Fourier Coefficients of $GL(n)$-Automorphic Functions over Number Fields {\tt arXiv:1711.11545v2}.
	\bibitem[T2]{Tsiokos3}E. Tsiokos. A Note on a Fourier Expansion of Automorphic Functions that Nontrivially Involves the Euclidean Algorithm {\tt arXiv:1801.03925v1}
\end{thebibliography}
\end{document}